\newtheorem{theorem}{\sc Theorem}[section]
\newtheorem{prop}[theorem]{\sc Proposition}
\newtheorem{cor}[theorem]{\sc Corollary}
 \newtheorem*{thmB}{Theorem B}
 \newtheorem*{thmC}{Theorem C}
 \newtheorem*{thmD}{Theorem D}
 \newtheorem*{thmE}{Theorem E}
  \newtheorem*{propA}{Proposition A}
  \newtheorem*{prop1}{Proposition}
\title{Intransitive Self-similar Groups}
\author{Alex C. Dantas}
\address{Departamento de Matem\'atica, Universidade de Bras\'ilia,
Brasilia-DF, 70910-900 Brazil}
\email{(Dantas) alexcdan@gmail.br}
\author{Tulio M. G. Santos}
\address{Instituto Federal Goiano,
Campos Belos-GO, 73840-000 Brazil}
\email{(Santos) tulio.gentil@ifgoiano.edu.br}
\author{Said N. Sidki}
\address{Departamento de Matem\'atica, Universidade de Bras\'ilia,
Brasilia-DF, 70910-900 Brazil}
\email{(Sidki) ssidki@gmail.com}
\subjclass[2010]{20D05; 20D45.}
\keywords{}
\begin{document}
\maketitle

\begin{abstract}
A group is said to be self-similar provided it admits a faithful
state-closed representation on some regular $m$-tree and the group is said
to be transitive self-similar provided additionally it induces transitive
action on the first level of the tree. A standard approach for constructing
a transitive self-similar representation of a group has been by way of a
single virtual endomorphism of \ the group in question. Recently, it was
shown that this approach when applied to the restricted wreath product $%
\mathbb{Z}\wr \mathbb{Z}$ could not produce a faithful transitive
self-similar representations for any $m\geq 2$ (see, \cite{DS}). In this work we
study state-closed representations without assuming the transitivity
condition. This general action is translated into a set of virtual
endomorphisms corresponding to the different orbits of the action on the
first level of the tree. In this manner, we produce faithful self-similar
representations, some of which are also finite-state, for a number of groups
such as $\mathbb{Z}^{\omega}$, $\mathbb{Z}\wr \mathbb{Z}$ and $(\mathbb{Z} \wr \mathbb{Z}) \wr C_{2}$.
\end{abstract}

\section{Introduction}

Self-similar\ groups have been  given directly as automorphism groups of
some $m$-tree , as is the case of the infinite torsion group of Grigorchuk 
\cite{Gr} and those of Gupta-Sidki \cite{GS}, or constructed to act on such
trees by way of a single virtual endomorphism. However these constructions
necessarily produce transitive self-similar groups, in the sense that the
corresponding state-closed group of automorphisms of the $m$-tree satisfies
the additional property of acting transitively on the first level of the
tree.

Transitive state-closed representations have been studied for the
family of abelian groups, finitely generated nilpotent groups, as well as
for metabelian groups, affine linear groups and arithmetic groups; see \cite%
{BarSid},\cite{BerSid}, \cite{K}, \cite{KS} for more details.

It was shown recently that the group $G=\mathbb{Z}\wr \mathbb{Z}$ fails
to have a faithful transitive state-closed representation on an $m$-tree for
any $m$ \cite{DS}. Yet as we will prove, this group admits a faithful
intransitive state-closed representation on the $3$-tree. Indeed, in this
representation the group is a $3$-letter, $3$-state automata group; it has
the following diagram

\begin{center}
\begin{tikzpicture}[shorten >=3pt,node distance=3.5cm,on grid,auto] 
  \node[state] (e) [] {$e$};
  \node[state] (a) [below left of=e] {$\alpha$};
  \node[state] (g) [below right of=e] {$\gamma$};
 \path [->]
   (e) edge [loop above] node {$0|0, 1|1, 2|2$ } (e)
        (a) edge    [loop left]  node {$1|0$} (a)
        (a) edge              node {$0|1, 2|2$} (e)
        (g) edge              node {$2|2$} (a)
        (g) edge  [loop right]      node {$0|0$} (g)
        (g) edge node [swap] {$1|1$} (e);

\end{tikzpicture}

    Diagram 1
\end{center}

\,

Given a self-similar group $G$ we will prove a number of results about
self-similarity of its over-groups from the following types:

(1) restricted direct product $G^{(\omega )}$ of countably many copies of $%
G$;

(2) restricted wreath product $G\wr K$ where $K$ is finite; restricted
wreath product $A\wr G$ where $A$ is abelian, both finitely and infinitely
generated, and for particular cases where $G$ is abelian.

The two types of groups in item (2) are in accord with Gruenberg's dichotomy
of residually-finite wreath products  \cite{Gru}. Our results extend ones which
have appeared in \cite{BarSid} and \cite{DS}.

Given a state-closed subgroup $G$ of automorphisms of the $m$-tree indexed
by strings from a set $Y$ of size $m$, a $G$-data $\left( \mathbf{m},\mathbf{%
H,F}\right) $ is obtained as follows: suppose $G$ have $s$ orbits $%
Y_{i}=\left\{ y_{i1},...,y_{im_{i}}\right\} $ in its action on $Y$ then $%
\mathbf{m=}\left( m_{1}...,m_{s}\right) $. Define  the
set of subgroups  
\begin{equation*}
\mathbf{H=}\left\{ H_{i}\mid \left[ G:H_{i}\right] =m_{i}\text{ }\left( 
\text{ }1\leq i\leq s\right) \right\} \text{ }
\end{equation*}%
where $H_{i}=Fix_{G}\left( y_{i1}\right) $; define the set of projections 
\begin{equation*}
\mathbf{F}=\left\{ f_{i}:H_{i}\rightarrow G\mid 1\leq i\leq s\right\} \text{.%
}
\end{equation*}
On the other hand, given a group $G$, an $s$- set of subgroups of $G$ 
\begin{equation*}
\mathbf{H=}\left\{ \left( H_{i}\mid \left[ G:H_{i}\right] =m_{i}\text{ }%
\left( \text{ }1\leq i\leq s\right) \right) \right\} \text{,}
\end{equation*}%
\begin{equation*}
\mathbf{m=}\left( m_{1}...,m_{s}\right) ,\text{ }m=m_{1}+...+m_{s}
\end{equation*}
and a set of virtual endomorphisms%
\begin{equation*}
\mathbf{F}=\left\{ f_{i}:H_{i}\rightarrow G\mid 1\leq i\leq s\right\} \text{,%
}
\end{equation*}
we have an abstract $G$-data $\left( \mathbf{m},\mathbf{H,F}\right) $. We
prove reciprocally

\begin{propA}
\label{PA copy(1)} Given a group $G$, $m\geq 1$ and a $G$-data $\left( 
\mathbf{m},\mathbf{H,F}\right) $. Then the data provides a state-closed
representation of $G$ on the $m$-tree with kernel 
\begin{equation*}
\langle K\leqslant \cap _{i=1}^{s}H_{i}\mid K\vartriangleleft
G,K^{f_{i}}\leqslant K,\forall i=1,...,s\rangle \text{,}
\end{equation*}%
called the $\mathbf{F}$-core of $\mathbf{H}$ .
\end{propA}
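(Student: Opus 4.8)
The plan is to realize the data as a \emph{wreath recursion} $\rho\colon G\to G\wr_{Y}\operatorname{Sym}(Y)$, inflate it to a homomorphism $\psi\colon G\to\operatorname{Aut}(T_{m})$ whose image is state-closed, and finally compute $\ker\psi$.

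\emph{Setting up the recursion.} For each orbit $Y_{i}=\{y_{i1},\dots,y_{im_{i}}\}$ I fix a right transversal $\{t_{i1}=1,t_{i2},\dots,t_{im_{i}}\}$ of $H_{i}=\operatorname{Fix}_{G}(y_{i1})$ in $G$, chosen so that $y_{i1}^{\,t_{ij}}=y_{ij}$; thus $y_{ij}$ is identified with the coset $H_{i}t_{ij}$. For $g\in G$ and each pair $(i,j)$ there are a unique $h_{ij}(g)\in H_{i}$ and index $\sigma_{i}(j)$ with $t_{ij}\,g=h_{ij}(g)\,t_{i\sigma_{i}(j)}$, and I set $\sigma(g)=\bigsqcup_{i}\sigma_{i}\in\operatorname{Sym}(Y)$ together with
\[
\rho(g)=\big(f_{1}(h_{11}(g)),\dots,f_{s}(h_{sm_{s}}(g))\big)\,\sigma(g).
\]

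\emph{$\rho$ is a homomorphism.} Coset arithmetic gives the cocycle identity $h_{ij}(gg')=h_{ij}(g)\,h_{i\sigma_{i}(g)(j)}(g')$ together with $\sigma_{i}(gg')=\sigma_{i}(g')\sigma_{i}(g)$. Because both factors on the right lie in $H_{i}$ and $f_{i}$ is a homomorphism on $H_{i}$, applying $f_{i}$ yields $f_{i}(h_{ij}(gg'))=f_{i}(h_{ij}(g))\,f_{i}(h_{i\sigma_{i}(g)(j)}(g'))$, which is exactly the multiplication rule in $G\wr_{Y}\operatorname{Sym}(Y)$; hence $\rho(gg')=\rho(g)\rho(g')$. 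Inflating $\rho$ to the tree in the standard way --- defining $\psi$ level by level through the identification $\operatorname{Aut}(T_{m})\cong\operatorname{Aut}(T_{m})\wr_{Y}\operatorname{Sym}(Y)$ and the rule $\psi(g)=(\psi(f_{1}(h_{11}(g))),\dots)\,\sigma(g)$ --- produces a homomorphism $\psi\colon G\to\operatorname{Aut}(T_{m})$ whose states at every vertex are again of the form $\psi(f_{i}(\cdot))\in\psi(G)$, so the image is state-closed.

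\emph{The kernel (main point).} Write $N=\ker\psi$. For the inclusion $N\subseteq$ $\mathbf F$-core I use that $\psi(g)=1$ forces both $\sigma(g)=1$ and all first-level states $\psi(f_{i}(h_{ij}(g)))=1$. The condition $\sigma(g)=1$ means $t_{ij}gt_{ij}^{-1}\in H_{i}$ for all $i,j$, so $N\leq\bigcap_{i}\operatorname{Core}_{G}(H_{i})\leq\bigcap_{i}H_{i}$, and $N\trianglelefteq G$; taking $t_{i1}=1$ gives $f_{i}(g)\in N$, so $N^{f_{i}}\leq N$. Thus $N$ is one of the subgroups $K$ defining the core, whence $N\leq$ $\mathbf F$-core. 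For the reverse inclusion I first check the core is well defined: a join of normal subgroups lying in $\bigcap_{i}H_{i}$ and satisfying $K^{f_{i}}\leq K$ again has these properties (using $f_{i}(\langle K_{\lambda}\rangle)=\langle f_{i}(K_{\lambda})\rangle$ since everything lies in $H_{i}$), so the $\mathbf F$-core is itself a normal, $\mathbf F$-invariant subgroup of $\bigcap_{i}H_{i}$. For such a $K$ and $g\in K$, normality gives $t_{ij}gt_{ij}^{-1}\in K\leq H_{i}$, hence $\sigma(g)=1$ and each state $f_{i}(t_{ij}gt_{ij}^{-1})\in f_{i}(K)\leq K$. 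Therefore $\psi$ maps $K$ into the first-level stabilizer with trivial top permutation and returns all states into $K$; a straightforward induction on the levels of $T_{m}$ then shows every vertex permutation of $\psi(g)$ is trivial, i.e.\ $\psi(g)=1$. Hence $\mathbf F\text{-core}\leq N$, and the two inclusions give $N=\mathbf F$-core.

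The main obstacle is precisely this last step: one must verify that the closure conditions ``$K\trianglelefteq G$, $K\leq\bigcap_{i}H_{i}$, $K^{f_{i}}\leq K$'' are simultaneously inherited by the join (so that the $\mathbf F$-core is genuinely the largest such subgroup) and that these same three conditions are exactly what is needed to run the level-by-level triviality induction --- the interplay between normality (to control all transversal elements $t_{ij}$, not merely $t_{i1}=1$) and $f_{i}$-invariance (to keep the states inside $K$) being the delicate point.
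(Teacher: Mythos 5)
Your proposal is correct and follows essentially the same route as the paper: the recursive Kaloujnine--Krasner representation built from Schreier cocycles $t_{ij}g\,t_{i\sigma_i(j)}^{-1}$ composed with the $f_i$, followed by the two inclusions $\ker\psi\leq\mathbf F\text{-core}$ (since $\ker\psi$ is normal, lies in $\bigcap_i H_i$ and is $f_i$-invariant) and $\mathbf F\text{-core}\leq\ker\psi$ (by the level-by-level triviality induction). Your explicit verification of the cocycle identity and of the well-definedness of the join are slightly more detailed than the paper's treatment, but the argument is the same.
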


The partition $\mathbf{m=}\left( m_{1},...,m_{s}\right) $ is called the 
\textit{orbit-type} of the representation.

We apply the Proposition A to obtain families of self-similar groups.

\begin{thmB}
\label{TB} Let $G$ be a self-similar group of degree $m$ and orbit-type $%
\left( m_{1},..,m_{s}\right) $. Then  the following hold.

\begin{itemize}
\item[1)] $G^{(\omega )}$ admits a faithful state-closed representation of
degree $m+1$ and orbit-type $\left( m_{1},..,m_{s},1\right) $; in
particular, for $G=\mathbb{Z}$ , the representation of the group $\mathbb{Z}%
^{(\omega )}$ is of orbit-type $\left( 2,1\right),$ and is in addition
finite-state.

\item[2)] Let $K$ be a regular subgroup of $Sym(\{1,...,s\})$. Then the
restricted wreath product $G\wr K$ admits a faithful state-closed representation of degree $(m_1\cdot m_2\cdot...\cdot m_s) \cdot s.$
\end{itemize}
\end{thmB}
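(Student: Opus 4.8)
The plan is to prove Theorem B by constructing explicit $G$-data for each over-group and invoking Proposition A.

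For part (1), I start with the hypothesis that $G$ is self-similar of degree $m$ and orbit-type $(m_1,\dots,m_s)$, so by the correspondence described in the excerpt I have a faithful $G$-data $(\mathbf{m},\mathbf{H},\mathbf{F})$ whose $\mathbf{F}$-core is trivial. I build a representation of $G^{(\omega)}$ on the $(m+1)$-tree as follows. Writing $G^{(\omega)}=\bigoplus_{j\geq 0}G_j$ with each $G_j\cong G$, I use the extra $(m+1)$-st letter, forming a new singleton orbit, to implement a shift: an element $(g_0,g_1,g_2,\dots)$ should act on the subtree hanging below the new letter as $(g_1,g_2,\dots)$, while its action on the first $m$ letters is governed by $g_0$ through the original data $\mathbf{F}$. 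Concretely I define $H_{s+1}=G^{(\omega)}$ (index $1$, the singleton orbit) with virtual endomorphism $f_{s+1}$ the shift $(g_0,g_1,\dots)\mapsto(g_1,g_2,\dots)$, and for $1\le i\le s$ I take the stabilizer subgroups and projections induced from the original $\mathbf{F}$ acting through the $g_0$ coordinate. The key point is that an element lies in the new $\mathbf{F}$-core only if it is killed by the original core (forcing $g_0$ trivial after enough applications) and is fixed by the shift, and iterating the shift shows every coordinate must vanish; hence the core is trivial and the representation is faithful. For $G=\mathbb{Z}$, one checks the original orbit-type $(2)$ together with the singleton gives orbit-type $(2,1)$, and since the shift and the single virtual endomorphism of $\mathbb{Z}$ are both finite-state, the whole automaton is finite-state.

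For part (2), I regard $G\wr K=G^{(K)}\rtimes K$ with $K$ a regular subgroup of $\mathrm{Sym}(\{1,\dots,s\})$, so $|K|=s$ and $K$ permutes the $s$ coordinates of the base group $G^{(K)}$ regularly. I aim to build a representation on the tree of degree $(m_1\cdots m_s)\cdot s$. The natural idea is to let $K$ act by permuting $s$ blocks, where block $k$ is an $(m_1\cdots m_s)$-ary subtree carrying a diagonal-type copy of the self-similar action of $G$ assembled from all $s$ virtual endomorphisms $f_1,\dots,f_s$ simultaneously. Because $K$ is regular, the top-level permutation action gives exactly the wreath structure, and I take a subgroup $H$ of $G\wr K$ of index $(m_1\cdots m_s)\cdot s$ — stabilizing one leaf at the first level — together with a virtual endomorphism that recovers, from an element fixing that leaf, its induced state, combining the $G$-states via the $f_i$ and a shift within $K$. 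I would verify faithfulness by checking the resulting $\mathbf{F}$-core is trivial, again using that the original $G$-core is trivial and that the regular $K$-action has trivial core on the finite $K$-block.

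The main obstacle I anticipate is part (2): correctly packaging the virtual endomorphisms so that the degree comes out as the stated product $(m_1\cdots m_2\cdots m_s)\cdot s$ rather than a sum, which signals that the $G$-action is being built transitively across a product of the orbit sizes rather than as a disjoint union of orbits. Getting the index bookkeeping and the definition of the single stabilizer subgroup $H$ right — so that the quotient set has the correct size and the induced endomorphism is well-defined on $H$ and lands in $G\wr K$ — is the delicate step, together with verifying that interleaving the base-group states with the regular $K$-shift preserves state-closedness. Once the data is correctly specified, faithfulness and the core computation should follow routinely from Proposition A and the triviality of the original $G$-core.
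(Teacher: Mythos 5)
Part (1) of your proposal is essentially the paper's own argument: you take $L_{s+1}=G^{(\omega)}$ with the shift as $f_{s+1}$, and for $1\le i\le s$ the subgroups $L_i=\{(h,g_2,g_3,\dots): h\in H_i\}$ with $\bar f_i$ acting as $f_i$ on the first coordinate; the core computation (first coordinate dies because the first-coordinate projection of any admissible normal subgroup is a normal, all-$f_i$-invariant subgroup of $\cap_i H_i$, then the shift kills the remaining coordinates) is exactly what the paper does, and the $\mathbb{Z}^{(\omega)}$ example comes out the same way.

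Part (2), however, is left with a genuine gap, and you flag it yourself: you never actually define the subgroup $H$ or the virtual endomorphism. Defining $H$ as ``the stabilizer of a leaf at the first level'' is circular, since the first-level action is what the data is supposed to produce; and the ``shift within $K$'' you propose to build into the endomorphism is a red herring --- no such shift is needed or used. The paper's resolution is concrete and simple: take $H=H_1\times\cdots\times H_s$ sitting inside the base group $G^{(K)}$ (so $H\cap K=1$ and $[G\wr K:H]=s\cdot m_1\cdots m_s$ automatically, with the factor $s$ coming from $K$ meeting $H$ trivially rather than from any $K$-component of the endomorphism), and take the single virtual endomorphism $f:(h_1,\dots,h_s)\mapsto(h_1^{f_1},\dots,h_s^{f_s})$ landing in $G^s\le G\wr K$. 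The core argument then needs only transitivity of $K$ (regularity is not essential here): if $L\le H$ is normal in $G\wr K$ and $f$-invariant, then conjugating any $g=(g_1,\dots,g_s)\in L$ by elements of $K$ places each coordinate $g_r$ in every $H_i$, and applying $f$ to these conjugates shows that the subgroup generated by all coordinates of all elements of $L$ is a normal subgroup of $G$ contained in $\cap_i H_i$ and invariant under every $f_i$; it is therefore contained in the $\mathbf{F}$-core of the original data, which is trivial by hypothesis. Without this explicit choice of $H$ and $f$, the ``index bookkeeping'' and the faithfulness verification you defer cannot be carried out, so as written part (2) is an outline rather than a proof.
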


With respect to the first item, for any positive integer $k$, the direct
product group $G^{k}$ is \ self-similar of degree $m$. It was shown in \cite%
{BarSid} that the group $\mathbb{Z}^{(\omega )}$ has a faithful transitive
self-similar representation of degree $2$, moreover and importantly, no such
representation can also be finite-state for any degree.
As a consequence
of the tree-wreathing operation defined by Brunner and Sidki \cite{BS1} and
by using $k$-inflation, the group $\mathbb{Z}\wr \mathbb{Z}$ is self-similar of
orbit-type $(2,1)$; see Diagram 1 . Then, by the second item of Theorem B, it follows that $(\mathbb{Z}\wr \mathbb{Z})\wr C_{2}$ is a self-similar group of degree $4$.
 

Restricted wreath products $A\wr G$ for $A$ abelian and $G=%
\mathbb{Z}^{d}$ have been a good source for automata groups. \ The first
instance in this family is the classical Lamplighter group $C_{2}\wr \mathbb{%
Z}$. It was shown in \cite{DS} that if $A\wr \mathbb{Z}^{d}$ $\ $admits a
faithful \textit{transitive} self-similar representation then $A$ is
necessarily a \textit{torsion group of finite exponent}. Also, it was proven
in \cite[Proposition 6.1]{BarSid} that when $B$ is a finite abelian group,
then $B\wr \mathbb{Z}^{d}$ is an automata group of degree $2|B|$. 

We generalize both results as follows

\begin{thmC}
\label{TD copy(1)} Let $A$ be a finitely generated abelian group and $%
B=Tor(A)$. Then  $G=A\wr \mathbb{Z}^{d}$ is an automata group of degree $%
2|B|+4$. In particular, for $A=\mathbb{Z}^{l}$, the degree can be reduced to $4$.
\end{thmC}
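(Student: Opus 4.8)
The plan is to use the splitting $A=\mathbb{Z}^{l}\oplus B$ of the finitely generated abelian group $A$ to break the problem into a torsion-free block and a torsion block, and then to assemble the two blocks into a single $G$-data on $G=A\wr\mathbb{Z}^{d}$ to which Proposition~A applies. Writing the base group of $G$ as $A^{(\mathbb{Z}^{d})}=(\mathbb{Z}^{l})^{(\mathbb{Z}^{d})}\oplus B^{(\mathbb{Z}^{d})}$, with $\mathbb{Z}^{d}$ shifting both summands simultaneously, one has two quotient maps $q_{1}\colon G\twoheadrightarrow \mathbb{Z}^{l}\wr\mathbb{Z}^{d}$ and $q_{2}\colon G\twoheadrightarrow B\wr\mathbb{Z}^{d}$, with $N_{1}=\ker q_{1}=B^{(\mathbb{Z}^{d})}$ and $N_{2}=\ker q_{2}=(\mathbb{Z}^{l})^{(\mathbb{Z}^{d})}$, both normal in $G$ and satisfying $N_{1}\cap N_{2}=1$. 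Each quotient also splits back into $G$, since $A=\mathbb{Z}^{l}\oplus B$ yields embeddings $\iota_{1}\colon\mathbb{Z}^{l}\wr\mathbb{Z}^{d}\hookrightarrow G$ and $\iota_{2}\colon B\wr\mathbb{Z}^{d}\hookrightarrow G$.

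For the torsion-free block I would realize $\mathbb{Z}^{l}\wr\mathbb{Z}^{d}$ as a finite-state self-similar group of degree $4$ by applying the tree-wreathing construction of Brunner--Sidki \cite{BS1} to faithful transitive self-similar representations of degree $2$ of $\mathbb{Z}^{l}$ and of $\mathbb{Z}^{d}$. Such degree-$2$ representations exist for every rank; for instance one may realize $\mathbb{Z}^{n}\cong \mathbb{Z}[x]/(x^{n}-2)$ with the virtual endomorphism given by multiplication by $x^{-1}$, which is contracting and hence finite-state. Combined with the $k$-inflation used for Diagram~1, the tree-wreath product carries $\mathbb{Z}^{l}\wr\mathbb{Z}^{d}$ onto the $4$-tree as an automata group. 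This furnishes a free-block $G$-data whose orbit sizes sum to $4$ and whose $\mathbf{F}$-core on $\mathbb{Z}^{l}\wr\mathbb{Z}^{d}$ is trivial. For the torsion block I would invoke \cite[Proposition~6.1]{BarSid}, which exhibits $B\wr\mathbb{Z}^{d}$ as an automata group of degree $2|B|$, giving a torsion-block $G$-data of orbit sizes summing to $2|B|$ and trivial $\mathbf{F}$-core on $B\wr\mathbb{Z}^{d}$.

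I would then pull both data-sets back to $G$ along $q_{1}$ and $q_{2}$: a subgroup $H$ with virtual endomorphism $f$ of $\mathbb{Z}^{l}\wr\mathbb{Z}^{d}$ lifts to $q_{1}^{-1}(H)\leqslant G$ of the same index, equipped with $\tilde{f}=\iota_{1}\circ f\circ (q_{1}|_{q_{1}^{-1}(H)})$, and symmetrically for $q_{2}$. Concatenating the lifted free and torsion families yields a single $G$-data on $G$ whose orbit-type is the concatenation of the two, of total degree $2|B|+4$. By Proposition~A this provides a state-closed representation of $G$ on the $(2|B|+4)$-tree with kernel equal to the $\mathbf{F}$-core $K$ of the combined data. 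The decisive point is that $K$ is trivial: being admissible for the lifted free family, $q_{1}(K)$ is an $\mathbf{F}$-admissible normal subgroup of $\mathbb{Z}^{l}\wr\mathbb{Z}^{d}$ (using $q_{1}\circ\iota_{1}=\mathrm{id}$), hence contained in the trivial free-block core, so $K\leqslant N_{1}$; symmetrically $K\leqslant N_{2}$; therefore $K\leqslant N_{1}\cap N_{2}=1$. Thus the representation is faithful, and it is finite-state because its state set decomposes along the two letter-blocks into the finite state sets of the free and torsion representations. Specializing to $B=1$ removes the torsion block and leaves the degree-$4$ representation, proving the final assertion.

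The main obstacle I anticipate is the finite-state property of the torsion-free block, uniformly in $l$ and $d$: one must choose the degree-$2$ representations of $\mathbb{Z}^{l}$ and $\mathbb{Z}^{d}$ so that their virtual endomorphisms are contracting on the underlying module (as with the $x^{n}-2$ choice above), and then verify that the Brunner--Sidki tree-wreath of two finite-state machines is again finite-state and has degree exactly $4$ rather than merely bounded. The remaining bookkeeping --- that the lifted maps $\tilde f$ are genuine virtual endomorphisms of $G$, that $N_{1}$ and $N_{2}$ are normal and $\tilde f$-invariant, and that $N_{1}\cap N_{2}=1$ --- is routine given the direct-sum splitting of $A$.
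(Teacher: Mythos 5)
Your overall architecture --- split $A=\mathbb{Z}^{l}\oplus B$, realize the torsion block $B\wr\mathbb{Z}^{d}$ in degree $2|B|$ via \cite[Proposition 6.1]{BarSid}, realize the free block $\mathbb{Z}^{l}\wr\mathbb{Z}^{d}$ in degree $4$, and concatenate the two data sets on $G$ --- is exactly the paper's route (its Proposition 5.2), and your faithfulness bookkeeping for the concatenation is fine; in fact your argument $K\leqslant N_{1}\cap N_{2}=1$ via the two projections is a clean substitute for the paper's centralizer argument ($K$ centralizes the self-centralizing base group).

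The genuine gap is the free block, which is where all the real work of the theorem lies. The Brunner--Sidki result you invoke, as quoted in the paper (\cite[Theorem 1]{BS1}), produces $\tilde{H}\wr\langle\alpha\rangle$ with a \emph{cyclic} top group; it is what handles $\mathbb{Z}\wr\mathbb{Z}$ (Proposition 3.2 and Diagram 1). For $d\geq 2$ the general tree-wreath product $A\,\tilde{\wr}\,\mathbb{Z}^{d}$ of \cite{BS1} is \emph{not} isomorphic to $A\wr\mathbb{Z}^{d}$: it is a strictly larger state-closed group that merely \emph{contains} a copy of $A\wr\mathbb{Z}^{d}$, and a subgroup of a state-closed group need not be state-closed. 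So "tree-wreathing two degree-$2$ representations" does not yield a faithful state-closed representation of $\mathbb{Z}^{l}\wr\mathbb{Z}^{d}$ itself, and no amount of care in choosing contracting degree-$2$ representations of $\mathbb{Z}^{l}$ and $\mathbb{Z}^{d}$ repairs this. The paper instead proves this step directly (Proposition 5.1): it builds an \emph{intransitive} $G$-data of orbit-type $(2,1,1)$ on $\mathbb{Z}^{l}\wr\mathbb{Z}^{d}$ with three explicit virtual endomorphisms --- one halving the exponent of $x_{1}$ and shifting the $a_{i}$, one cyclically permuting the top generators, and one "evaluation" map $a_{1}^{p(x_{1},\dots,x_{d})}\mapsto x_{1}^{p(1,\dots,1)}$ --- and the triviality of the $\mathbf{F}$-core is a nontrivial minimality argument on the $x_{j}$-degrees of the exponent polynomials. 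That construction (or some replacement for it) is the missing ingredient; without it your proof establishes nothing beyond the already-known cases.
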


The theorem will follow from a general process of concatenation (see Proposition \ref{P3.2}) of
the two cases $G_{1}=B\wr \mathbb{Z}^{d}$ and  $G_{2}=\mathbb{Z}^{l}\wr \mathbb{Z%
}^{d}$. We note that the result for $\mathbb{Z} \wr \mathbb{Z}$ answers positively a question posed by A. Woryna in \cite[page 100]{W}.

It was shown in  \cite{DS1}  that the group $C_{p}\wr \mathbb{Z}^{d}$ where $%
C_{p}$ is cyclic of prime order $p$ and $d\geq 2$  is self-similar of degree $p^{2}$, but that such a group does
not have a  faithful \textit{transitive} state-closed representation of prime degree.
In this context we prove:

\begin{thmD}
\label{TC copy(1)} Let $p$ a prime number then $C_{p}\wr \mathbb{Z}^{2}$ is
a self-similar group of degree $p+1$ of orbit-type $%
\left( p,1\right)$. Indeed, $C_{p}\wr \mathbb{Z}^{2}$ is generated by  $\alpha = (\alpha, \alpha \sigma, ..., \alpha \sigma^{p-1}, \alpha \beta)$, $\sigma = (e, ..., e, \sigma)(0 \, 1 \, ... p-1)$ and $\beta = (e, ..., e, \alpha)$. In particular, the group $C_{2}\wr \mathbb{Z}^{2}$ is
self-similar of degree $3$.
\end{thmD}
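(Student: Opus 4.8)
The plan is to verify directly that the three automorphisms $\alpha,\sigma,\beta$ defined by the given recursive (wreath-recursion) formulas generate a state-closed subgroup of $\mathrm{Aut}(T)$ on the $(p+1)$-tree which is isomorphic to $C_p\wr\mathbb{Z}^2$. Because Proposition A tells us exactly what the kernel of a representation built from $G$-data is, the cleanest route is to treat this as an *abstract* $G$-data for $G=C_p\wr\mathbb{Z}^2$ with orbit-type $(p,1)$: the first $p$ letters form one orbit (permuted cyclically by $\sigma$) and the last letter is fixed. So I would first write down the subgroups $H_1=\mathrm{Fix}_G(y_{11})$ and $H_2=\mathrm{Fix}_G(y_{21})$ of indices $p$ and $1$ respectively, together with the two virtual endomorphisms $f_1,f_2$ read off from the diagonal entries of the recursions, and then apply Proposition A. The faithfulness claim then reduces to showing the $\mathbf{F}$-core is trivial.

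**Next I would set up coordinates** on $G=C_p\wr\mathbb{Z}^2$. Write $\mathbb{Z}^2=\langle x,y\rangle$ and let the base group be $\bigoplus_{(i,j)\in\mathbb{Z}^2}C_p$, generated by a distinguished element $a$ (a generator of the copy of $C_p$ at the origin) with $x,y$ acting by translation. The reading I expect from the given recursion is that $\sigma$ (of order $p$, acting as the $p$-cycle $(0\,1\,\cdots\,p-1)$ on the first $p$ letters and fixing the last) and $\beta$ together play the role of the two commuting translations, while $\alpha$ carries the lamp. I would then check that $\alpha,\sigma,\beta$ satisfy the defining relations of $C_p\wr\mathbb{Z}^2$: that $\alpha^p=e$, that the appropriate conjugates/translates of $\alpha$ commute with one another (giving the abelian base $C_p^{(\mathbb{Z}^2)}$), and that the two "translation" generators commute. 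Each of these is an identity between two automorphisms of $T$, which I would verify by comparing their wreath-recursion expansions level by level, using self-similarity to reduce an identity on $T$ to identities among the already-named states.

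**The faithfulness (hardest) part.** Verifying the relations shows there is a surjection $C_p\wr\mathbb{Z}^2\twoheadrightarrow\langle\alpha,\sigma,\beta\rangle$; the real work is injectivity, i.e.\ that the state-closed representation is faithful, equivalently that the $\mathbf{F}$-core of $\mathbf{H}$ from Proposition A is trivial. I would argue as follows: a normal subgroup $K\trianglelefteq G$ contained in $H_1\cap H_2$ and invariant under both $f_1$ and $f_2$ must, by iterating the $f_i$, have all its "coordinates" pushed into deeper and deeper levels; since $G$ is residually finite and the base group is a restricted (finitely supported) direct sum, any nontrivial element has bounded support, and the $f_i$-invariance forces an element with unbounded support, a contradiction. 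Concretely, I would show that the only normal subgroup stable under the recursion and lying in the stabilizer of the first level is trivial, by tracking how the $\mathbb{Z}^2$-grading of the base group is shifted under $f_1$ and $f_2$ and checking no finitely supported configuration can survive.

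**The main obstacle** I anticipate is the faithfulness/core-triviality step, since verifying the relations is a mechanical (if tedious) level-by-level comparison, whereas ruling out a nontrivial $\mathbf{F}$-core requires understanding precisely how the two virtual endomorphisms move the generators of the base group around the $\mathbb{Z}^2$-lattice. I expect the clean way to finish is to identify $f_1,f_2$ explicitly as shift-type maps on the lamp coordinates and show their joint action contracts any finitely supported element toward the identity, so that no nontrivial normal subgroup can be simultaneously invariant. The final sentence, that $C_2\wr\mathbb{Z}^2$ has degree $3$, is then just the case $p=2$ of the general statement and requires no separate argument.
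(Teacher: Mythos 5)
Your overall architecture matches the paper's: set up the $G$-data $(\,(p,1),\{H_1,H_2\},\{f_1,f_2\})$, invoke Proposition A, and reduce everything to showing the $\mathbf{F}$-core is trivial. But there are two problems. First, a concrete misreading: you assign $\sigma$ and $\beta$ the role of the two commuting translations and let $\alpha$ ``carry the lamp.'' In fact $\sigma$ is the image of the lamp generator $a$ (it is the unique generator of order $p$: $\sigma^p=(e,\dots,e,\sigma^p)$ forces $\sigma^p=e$), while $\alpha$ and $\beta$ are the images of $y$ and $x$; in particular $\sigma$ does \emph{not} commute with $\beta$, so the relation-checking step of your plan would fail as written. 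The paper's data is $H_1=G'\langle x,y\rangle$ of index $p$ (with transversal $\{e,a,\dots,a^{p-1}\}$, so it is $a$ that permutes the first $p$ letters) and $H_2=G$.

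Second, and more seriously, your faithfulness argument is a heuristic that does not match the actual mechanism and would not close the gap. You propose that iterating $f_1,f_2$ ``contracts any finitely supported element toward the identity'' or ``forces an element with unbounded support.'' But $f_2$ is a genuine automorphism of $G$ (induced by $a\mapsto a$, $x\mapsto y$, $y\mapsto xy$, i.e.\ by the hyperbolic matrix $\left(\begin{smallmatrix}0&1\\1&1\end{smallmatrix}\right)\in GL_2(\mathbb{Z})$), so it preserves finite support, and $f_1$ is a projection killing most of the lamp configuration; neither ``spreads out'' elements of $K$. The paper's proof works instead with the exponent $s(x,y)\in\mathbb{Z}[x^{\pm1},y^{\pm1}]$ of a nontrivial element $a^{s(x,y)}\in K\cap G'$, written in the normal form $s=p(x)(x-1)+q(y)(y-1)+r(x,y)(x-1)(y-1)$: iterated $f_1$-invariance forces $(x-1)\mid s$, then $f_2$-invariance and the substitution $(x,y)\mapsto(y,xy)$ upgrade this to $(x^{n_i}-y^{n_{i-1}})\mid s$ for the entire Fibonacci sequence $n_i$, whence $s\in\bigcap_i\langle x^{n_i}-y^{n_{i-1}}\rangle=\{0\}$ because the degrees of these binomials are unbounded. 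Without identifying $f_2$ as this hyperbolic automorphism and running the divisibility argument (or supplying some equally precise substitute), the core-triviality — which is the whole content of the theorem — remains unproved.
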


Let $f:H\rightarrow G$ be a virtual endomorphism. Set $G_{0}=G$ and $%
G_{n}=G_{n-1}^{f^{-1}}$ for all $n\geq 1$. Define the parabolic subgroup $%
G_{\omega }=\cap _{j\geq 0}G_{j}$. Let $G_{\omega }\setminus G$ denote the
set of right cosets $G_{\omega }$ in $G$, let $A^{(G_{\omega }\setminus
G)}=\{\phi:G_{\omega }\setminus
G \to A \,\, \text{ of finite support}\,\,\}$ and have $g \in G$ act on it by translation. The following result was proven for
transitive self-similar groups in \cite{BarSid}.

\begin{prop1}
(Proposition 6.1) Let $G$ be a transitive self-similar group of degree $m$ and parabolic
subgroup $G_{\omega}$, and let $B$ be a finite abelian group. Then the
extension $B^{(G_{\omega}\setminus G)}\rtimes G$ is transitive
self-similar of degree $|B|m$, and is finite-state whenever $G$ is.
\end{prop1}

Let $G$ be a state-closed group with respect the data $(\mathbf{m}, \mathbf{H%
}, \mathbf{F})$, where $\mathbf{m} = (m_{1}, ...,
m_{s})$, with $m_{1} \geq 2, ..., m_{s} \geq 2$. 
For each $i=1,...,s$ define $G_{i0}=G$, $G_{ij}={(G_{i(j - 1)})}^{f_i^{-1}}$ ($j>0$) and $G_{\omega_i}= \cap_{j\geq 0}G_{ij}.$ With this notation we have:

\begin{thmE}
Let $B$ be a finite abelian group and $G$ be a self-similar group of
orbit-type $(m_{1}, ...,
m_{s})$, with $m_{1} \geq 2, ..., m_{s} \geq 2$. Then the group $$B^{((G_{\omega_1}\setminus G) \times ... \times (G_{\omega_s}\setminus G))}\rtimes G^s$$
is self-similar of orbit-type $(|B|.m_{1}...m_{s}, \, 1)$.
\end{thmE}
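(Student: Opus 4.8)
The plan is to produce, via Proposition~A, a $W$-data $\left(\tilde{\mathbf m},\tilde{\mathbf H},\tilde{\mathbf F}\right)$ on the group $W=B^{(\Omega)}\rtimes G^{s}$, where $\Omega=\prod_{i=1}^{s}\left(G_{\omega_i}\setminus G\right)$, whose orbit-type is $\tilde{\mathbf m}=\left(|B|\,m_1\cdots m_s,\,1\right)$; faithfulness will then amount to showing that the $\tilde{\mathbf F}$-core of $\tilde{\mathbf H}$ is trivial. I would split the degree $|B|\,m_1\cdots m_s+1$ into one large orbit and one fixed letter. The large orbit carries the ``lamp-and-product'' action: the $s$ commuting copies of $G$ act on $Y_1\times\cdots\times Y_s$ by the product of the $s$ transitive constituent actions attached to $f_1,\dots,f_s$ (degree $m_1\cdots m_s$), while $B$ is adjoined as the lamp coordinate (factor $|B|$). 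The fixed letter carries an endomorphism $\tilde f_2:W\to W$, an index-one virtual endomorphism, whose job is to recursively generate the module and, crucially, to link the $s$ copies of $G$.

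I would organize the construction in two stages. \textbf{Stage 1.} First show that $G^{s}$ is itself self-similar of orbit-type $(m_1\cdots m_s,\,1)$: on the large orbit use the product action, so that the big-orbit virtual endomorphism is $f_1\times\cdots\times f_s$ on the product stabilizer $\prod_i H_i$, whose associated parabolic is exactly $\prod_{i} G_{\omega_i}$; on the fixed letter place a linking endomorphism of $G^{s}$. Here one uses the identification $\left(\prod_i G_{\omega_i}\right)\setminus G^{s}\cong\prod_i\left(G_{\omega_i}\setminus G\right)=\Omega$. \textbf{Stage 2.} Apply the coset-module (lamplighter) construction of Proposition~6.1, now relative to the large orbit of $G^{s}$ and its parabolic $\prod_i G_{\omega_i}$: adjoin $B$ as lamps indexed by $\left(\prod_i G_{\omega_i}\right)\setminus G^{s}=\Omega$, which multiplies the large orbit by $|B|$ and carries the fixed letter along unchanged. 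The resulting group is $B^{(\Omega)}\rtimes G^{s}=W$, of orbit-type $(|B|\,m_1\cdots m_s,\,1)$; the fact that the two $G$-sets agree is what makes $M=B^{(\Omega)}$ the correct module.

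For faithfulness I would compute the $\tilde{\mathbf F}$-core by splitting it along $M$ and $G^{s}$. The module part is immediate: since $G^{s}$ is transitive on $\Omega$ and the core must lie in the stabilizer $\tilde H_1$, whose module elements vanish at the base coordinate, transitivity lets one move any nonzero coordinate to the base, so a normal subgroup of $W$ inside $\tilde H_1$ meets $M$ trivially. Hence the core embeds in $G^{s}$, and there it is a normal subgroup $D\le G^{s}$ invariant under both $f_1\times\cdots\times f_s$ and the linking map $\tilde f_2$, lying in $\prod_i H_i$. The design of $\tilde f_2$ must force the $s$ coordinates of $D$ to satisfy \emph{all} the invariances $K^{f_i}\le K$ simultaneously; this collapses the condition to ``$D$ is contained in a normal subgroup $K\trianglelefteq G$ with $K\le\cap_i H_i$ and $K^{f_i}\le K$ for every $i$'', i.e. to the $\mathbf F$-core of the original $G$-data, which is trivial because $G$ is self-similar. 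The hypothesis $m_i\ge2$ for all $i$ enters here to guarantee that each $G_{\omega_i}$ has infinite index, so that $\Omega$ is infinite, $M$ is of infinite rank, and none of the product factors degenerates.

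The \textbf{main obstacle} is Stage~1, specifically the construction of the linking endomorphism $\tilde f_2$ and the verification that it reduces the $G^{s}$-part of the core to the full $\mathbf F$-core of $G$. The subtlety is that the individual virtual endomorphisms $f_i$ need not have trivial core, so the bare product action of $G^{s}$ can be unfaithful; only by letting $\tilde f_2$ cycle, or otherwise mix, the $s$ coordinates—so that an element invisible to the $i$-th constituent is tested against the $j$-th—can one exploit the global faithfulness of $G$. Making $\tilde f_2$ a genuine endomorphism of the semidirect product $W$, compatible with the translation action of $G^{s}$ on the infinite index set $\Omega$—in particular when the factors $G_{\omega_i}\setminus G$ are pairwise non-isomorphic $G$-sets—is the delicate technical point; everything else reduces to the bookkeeping already present in Proposition~A and Proposition~6.1.
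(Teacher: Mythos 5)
Your global architecture does match the paper's: the large orbit is governed by a virtual endomorphism $\chi_{1}$ built from $\lambda=f_{1}\times\cdots\times f_{s}$ on $\prod_{i}H_{i}$ together with the lamp coordinate, the fixed letter carries an endomorphism $\chi_{2}$ of the whole group that cyclically shifts the $s$ coordinates (this \emph{is} the "linking endomorphism" you are looking for), and shift-invariance plus $\prod_i f_{i}$-invariance collapses the $G^{s}$-part of the core to the $\mathbf{F}$-core of $G$, which is trivial. So the skeleton is right.

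There is, however, a genuine gap, and it sits exactly at the step you declare "immediate": showing that the core meets $M=B^{(\Omega)}$ trivially. You take the module part of $\tilde H_{1}$ to be the functions vanishing at the base point $\bar e=(G_{\omega_{1}},\dots,G_{\omega_{s}})$ and invoke transitivity. But that subgroup $M_{0}$ of $M$ is not normalized by $\prod_{i}H_{i}$ (any $h_{i}\in H_{i}\setminus G_{\omega_{i}}$ moves $\bar e$), so $M_{0}\rtimes\prod_{i}H_{i}$ is not a subgroup of $W$ and cannot serve as $\tilde H_{1}$; indeed $\langle M_{0},\prod_{i}H_{i}\rangle=M\rtimes\prod_{i}H_{i}$ has the wrong index. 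The paper instead takes the module part of $\mathcal{H}$ to be the augmentation (sum-zero) subgroup $\{\phi:\prod_{\bar g}\phi(\bar g)=1\}$, which is normal in all of $W$ and has index $|B|$ in $M$ --- but for that choice your transitivity argument proves nothing: a $\phi$ supported on $\{\bar e,\bar g\}$ with values $b,b^{-1}$ lies in it, as do all of its conjugates. Eliminating such elements is the real work of the proof: one takes $\phi$ in the core with support $S$ of minimal cardinality (necessarily $|S|\geq 2$ by the sum-zero condition), normalizes so that $\bar e\in S$, and applies $\chi_{1}$ repeatedly; since the support of $\phi^{\chi_{1}^{j}}$ equals $G_{\omega_1}(S^{\pi_1}\cap G_{1j})^{f_1}\times\cdots\times G_{\omega_s}(S^{\pi_s}\cap G_{sj})^{f_s}$ and has cardinality at most $|S|$, minimality forces $S$ to lie in $\prod_{i}(G_{\omega_{i}}\setminus G_{ij})$ for every $j$, hence $S=\{\bar e\}$, a contradiction. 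This descending-parabolic, minimal-support argument is the engine of the proof (it is also the content of Bartholdi--Sidki's Proposition 6.1, which you cannot invoke as a black box here since $G^{s}$ with your Stage-1 data is intransitive); it is entirely absent from your proposal, so the faithfulness claim remains unproved. By contrast, the difficulty you single out as the main obstacle is comparatively minor, though your worry about well-definedness of the shift when the $G$-sets $G_{\omega_{i}}\setminus G$ differ is a legitimate point that the paper itself glosses over.
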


A question which has remained open is whether the group $C_{2}\wr (\mathbb{Z%
}\wr \mathbb{Z})$ is self-similar.

\section{Preliminaries}

\subsection{\textbf{Groups acting on rooted }$m$-\textbf{trees.}}

The vertices of a rooted $\ m$-tree $\mathcal{T}_{m}$ are indexed by strings
from an alphabet $Y$ of $m\geq 1$ letters, ordered by $u<v$ provided the
string $v$ is a prefix of $u$. The tree $\mathcal{T}_{m}$ is also denoted as 
$\mathcal{T}\left( Y\right) $; normally, we chose to take $Y=\left\{
0,1,...,m-1\right\} $. Given a group and a representation of it on $\mathcal{%
T}_{m}$ we say both the group and its representation have degree $m$.

The automorphism group $\mathcal{A}_{m}$, or $\mathcal{A}\left( Y\right) $,
of $\mathcal{T}_{m}$ is isomorphic to the restricted wreath product
recursively defined as $\mathcal{A}_{m}=\mathcal{A}_{m}\wr S_{m}$, where $%
S_{m}$ is the symmetric group of degree $m$. An automorphism $\alpha $ of $%
\mathcal{T}_{m}$ has the form $\alpha =(\alpha _{0},...,\alpha _{m-1})\sigma
(\alpha )$, where the state $\alpha _{i}$ belongs to $\mathcal{A}_{m}$ and
where $\sigma :\mathcal{A}_{m}\rightarrow S_{m}$ is the permutational
representation of $\mathcal{A}_{m}$ on $Y$, the first level of the tree $%
\mathcal{T}_{m}$. Successive developments of the automorphisms $\alpha _{i}$
produce $\alpha _{u}$ for all vertices $u$ of the tree. For $k\geq 1$, the
action of $\alpha $ on a string $y_{1}y_{2}...y_{k}\in Y^{k}$ is as follows

\begin{equation*}
\alpha :y_{1}y_{2}...y_{k}\mapsto \left( y_{1}\right) ^{\sigma (\alpha
)}\left( y_{2}...y_{k}\right) ^{\alpha _{y_{1}}}\text{. }
\end{equation*}%
This implies that $\alpha $ induces an automorphism $\alpha _{\mathbf{k}}$
on the $m^{k}$-tree $\mathcal{T}\left( Y^{k}\right) $ and the above action
on $k$-stings gives us a group embedding $\mathcal{A}\left( Y\right)
\rightarrow \mathcal{A}\left( Y^{k}\right) $ which we call $k$-\textit{%
inflation}.

For $\alpha \in \mathcal{A}\left( Y\right) $, the set of automorphisms%
\begin{equation*}
Q(\alpha )=\{\alpha ,\alpha _{0},...,\alpha _{m-1}\}\cup
_{i=0}^{m-1}Q(\alpha _{i})
\end{equation*}%
is called the set of \textit{states} of $\alpha $ and this automorphism is
said to be \textit{finite-state} provided $Q(\alpha )$ is finite. A subgroup 
$G$ of $\mathcal{A}_{m}$ is \textit{state-closed} (or, \textit{self similar}%
) if $Q(\alpha )$ is a subset of $G$ for all $\alpha $ in $G$. More
generally, a subgroup $G$ of $\mathcal{A}_{m}$ is $k$th\textit{-level} 
\textit{state-closed} if for all $\alpha \in G$, the states $\alpha _{u}$ belong to 
$G $ for all strings $u$ of length $k$; we see then that the $k$-inflation of 
$G $ is state-closed. A group which is finitely generated, state-closed and
finite-state is called an \textit{automata group}.

\subsection{\textbf{Virtual endomorphisms.}}

Given a subgroup $H$ of $G$ of finite index $m$, a homomorphism $%
f:H\rightarrow G$ is called a \textit{virtual endomorphism} of $G$. A
subgroup $K$ of $H$ is $f$\textit{-invariant} provided $K^{f}\subseteq K$.
The maximal subgroup of $H$ which is both $f$-invariant and normal in $G$ is
called the $f$\textit{-core} of $H$; if this subgroup is trivial then $\ f$
is said to be \textit{simple}. If $G$ is a self-similar subgroup of $%
\mathcal{A}_{m}$, then for $H=Fix_{G}(0)$, the subgroup stabilizer of the
vertex $0\in Y$, we have the projection $f:H\rightarrow G$ which can be seen
to be simple.

Let $G$ be a group, $H$ a subgroup of $G$ of finite index $m$ with right
transversal $T=\{t_{0},...,t_{m-1}\}$ in $G$. Then the induced permutation
representation $\sigma $ of $G$ on $T$ is transitive and we define the
Scheier function $\theta :G\times T\rightarrow H$ by $\theta \left(
g,t_{i}\right) =t_{i}g(t_{j})^{-1}$ where $Ht_{j}=Ht_{i}g$.

\subsection{Recursive \textbf{Kaloujnine-Krasner}}

The Kaloujnine-Krasner Theorem \cite{KK} provides us with an extension of $\sigma 
$ to a representation $\varphi :$ $G\rightarrow H\wr G^{\sigma }$ defined by%
\begin{equation*}
\varphi :g\mapsto (\theta \left( g,t_{0}\right) ,...,\theta \left(
g,t_{m-1}\right) )g^{\sigma }\text{.}
\end{equation*}%
On applying\ the virtual endomorphism $f$ to $\theta \left( g,t_{i}\right) $%
, we obtain $\theta \left( g,t_{i}\right) ^{f}$ $\in G$ which allows us to
repeat the above representation $\varphi $ to this element. Thus the
Kaloujnine-Krasner representation extends recursively to a representation on
the $m$-tree $\mathcal{T}_{m}$, , indicated by the same symbol, as follows

\begin{equation*}
\varphi :\text{ }g\mapsto \left( \theta \left( g,t_{0}\right) ^{f\varphi
},...,\theta \left( g,t_{m-1}\right) ^{f\varphi }\right) \,g^{\sigma }.
\end{equation*}%
The kernel of the representation $\varphi $ is the $f$\textit{-core} of $H$
\cite{NS}.

\ Given a group $G$, $m\geq 1$ and $G$-data $\left( \mathbf{m},\mathbf{H,F}%
\right) $ we re-work the recursive representation in the transitive case as
follows. For each $H_{i}$ choose a right transversal $T_{i}=\{t_{i1},...,t_{im_i}\}$ and let $\theta
_{i}$ be the corresponding Schreier function . Then define $\varphi
:G\rightarrow \mathcal{A}_{m}$ by%
\begin{equation*}
\varphi :\text{ }g\mapsto \left( \theta _{i}\left( g,t\right) ^{f_{i}\varphi
}\mid 1\leq i\leq s,t\in T_{i}\right) \,g^{\sigma }.
\end{equation*}

\section{Proof of Proposition A}

We reformulate the proposition more concretely as follows

\begin{propA}
\label{TA}

 Given a group $G$, $m\geq 1$ and $G$-data $\left( \mathbf{m},%
\mathbf{H,F}\right) $, the function $\varphi :G\rightarrow \mathcal{A}_{m}$
defined by 
\begin{equation*}
\text{ }g\mapsto \left( \theta _{i}\left( g,t\right) ^{f_{i}\varphi }\mid
1\leq i\leq s, \, t\in T_{i}\right) \,g^{\sigma }.
\end{equation*}%
is a state-closed representation of $G$ on the $m$-tree with kernel the $%
\mathbf{F}$-core of $\mathbf{H}$.
  
\end{propA}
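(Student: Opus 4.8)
The plan is to verify four things in turn: that $\varphi$ is well-defined as a map into $\mathcal{A}_m$, that it is a homomorphism, that its image is state-closed, and that its kernel is exactly the $\mathbf{F}$-core of $\mathbf{H}$. For well-definedness I would first record that $\theta_i(g,t)\in H_i$ for every $g\in G$ and $t\in T_i$, since $H_i\theta_i(g,t)=H_i\,tg\,\tau(tg)^{-1}=H_i$ (writing $\tau(x)\in T_i$ for the transversal representative of $H_ix$); hence $f_i$ may be applied and $\theta_i(g,t)^{f_i}\in G$. The definition of $\varphi$ is recursive: the root permutation of $\varphi(g)$ is $g^\sigma$, which preserves each orbit $Y_i$ and acts on it as $G$ acts on the cosets $H_i\backslash G$, and the state at the letter $y_{ij}$ indexed by $t_{ij}\in T_i$ is $\varphi$ of the element $\theta_i(g,t_{ij})^{f_i}\in G$. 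I would make this precise by defining the action of $\varphi(g)$ on $Y^k$ by induction on $k$, observing that the level-$k$ action is determined by the root permutation together with the level-$(k-1)$ actions of the finitely many states $\varphi(\theta_i(g,t)^{f_i})$; this produces a genuine automorphism of $\mathcal{T}_m$.

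The homomorphism property is the technical core, and I expect it to require the most care. The key input is the Schreier cocycle identity
\[
\theta_i(gh,t)=\theta_i(g,t)\,\theta_i\bigl(h,\tau(tg)\bigr),
\]
which follows from the telescoping $tg\,\tau(tg)^{-1}\cdot\tau(tg)\,h\,\tau(tgh)^{-1}$ together with $H_i\tau(tg)h=H_itgh$. The point requiring attention is the bookkeeping that identifies $\tau(tg)$ with the transversal element $t^{g^\sigma}$ indexing the letter $(y_{ij})^{g^\sigma}$. Applying $f_i$ and then $\varphi$, and using induction on levels so that $\varphi$ is already known to be multiplicative on the relevant states one level down, I obtain $\theta_i(gh,t)^{f_i\varphi}=\theta_i(g,t)^{f_i\varphi}\cdot\theta_i(h,t^{g^\sigma})^{f_i\varphi}$. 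Comparing this with the wreath-product multiplication rule $(\alpha\beta)_y=\alpha_y\,\beta_{y^{\sigma(\alpha)}}$ and using $(gh)^\sigma=g^\sigma h^\sigma$, one checks that the root permutations and all the states of $\varphi(g)\varphi(h)$ and $\varphi(gh)$ agree, so $\varphi(gh)=\varphi(g)\varphi(h)$. State-closedness is then immediate from the very definition: the first-level states of $\varphi(g)$ are the elements $\varphi(\theta_i(g,t)^{f_i})\in\varphi(G)$, so $Q(\varphi(g))\subseteq\varphi(G)$ for every $g$, and iterating gives state-closure of $\varphi(G)$.

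It remains to identify the kernel. Write $N=\ker\varphi$ and let $C$ denote the $\mathbf{F}$-core, noting first that $C$ is itself a normal, simultaneously $f_i$-invariant subgroup of $\bigcap_i H_i$ (the join of such subgroups is again one, since each $f_i$ is a homomorphism on $H_i\supseteq\bigcap_i H_i$). For $N\subseteq C$: if $\varphi(g)=1$ then $g^\sigma=\mathrm{id}$, so $g$ fixes every $y_{i1}$ and $N\subseteq\bigcap_i H_i$; choosing each transversal so that $t_{i1}=1$ gives $\theta_i(g,1)=g$ for $g\in H_i$, whence $g^{f_i}=\theta_i(g,1)^{f_i}\in\ker\varphi=N$, so $N^{f_i}\subseteq N$; as $N\vartriangleleft G$ this exhibits $N$ as one of the admissible subgroups in the definition of $C$, giving $N\subseteq C$.

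For the reverse inclusion $C\subseteq N$ I would argue by induction on levels. Since $C\vartriangleleft G$ and $C\subseteq H_i$, we have $C\subseteq\mathrm{Core}_G(H_i)$, so $C$ acts trivially on each $Y_i$ and hence the root permutation of $\varphi(g)$ is trivial for every $g\in C$. Moreover, for $t\in T_i$ normality gives $tgt^{-1}\in C\subseteq H_i$, so $H_itg=H_it$ and therefore $\theta_i(g,t)=tgt^{-1}\in C$, whence $\theta_i(g,t)^{f_i}\in C^{f_i}\subseteq C$. Thus each first-level state of $\varphi(g)$ is again $\varphi$ of a $C$-element; an induction on tree depth, using that both the root permutation vanishes and the states recur within $C$, shows that every $g\in C$ acts trivially on all of $\mathcal{T}_m$, so $C\subseteq N$. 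Combining the two inclusions yields $N=C$, the $\mathbf{F}$-core of $\mathbf{H}$, as claimed.
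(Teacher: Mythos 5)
Your proof is correct and follows essentially the same route as the paper's: the homomorphism property via induction on the tree level (your Schreier cocycle identity is exactly what the paper's step $(gh)^{\varphi_i}=(g^{\varphi}h^{\varphi})_i$ uses implicitly), state-closure by construction, and the kernel identified by the same two inclusions, with $\ker\varphi$ shown to be normal, contained in $\cap_i H_i$ and $f_i$-invariant, and conversely the $\mathbf{F}$-core shown to act trivially level by level. You simply supply more of the bookkeeping (well-definedness of $\theta_i(g,t)^{f_i}$, the choice $t_{i1}=e$, closure of the core under joins) than the paper's terser write-up.
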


\begin{proof}
Consider $g$ and $h$ elements of $G$. Clearly $(gh)^{\sigma}=g^{\sigma}h^{%
\sigma}$, so if $i\in Y$, then 
\begin{equation*}
i^{(gh)^{\varphi}}
=i^{(gh)^{\sigma}}=i^{g^{\sigma}h^{\sigma}}=i^{g^{\varphi}h^{\varphi}}.
\end{equation*}
By induction on the length of word in $Y$, we have that

\begin{equation*}
(iu)^{(gh)^{\varphi}}=i^{(gh)^{\sigma}}u^{(gh)^{\varphi_i}}=i^{(gh)^{%
\sigma}}u^{(g^{\varphi}h^{\varphi})_i}=(iu)^{g^{\varphi}h^{\varphi}}.
\end{equation*}

\noindent Therefore $\varphi$ is a homomorphism. By construction, $G^{\varphi}$ is
state-closed.

Let $K$ be the $\textbf{F}$-core of $\textbf{H}$ and $x\in K$. Then $%
x^{\sigma}=1$; in fact, $k^{x^{\sigma}}=l$ if and only if $%
H_it_{ik}x=H_it_{il}$ for some $i=1,...,s$. But $%
H_it_{ik}x=H_ix^{t_{ik}^{-1}}t_{ik}=Ht_{il}$, so $k=l$ and $x^{\sigma}=1$.
Since $K^{f_{i}}\leqslant K$, ${(\theta_i(x,t)^{f_i\varphi})}^\sigma$ is a trivial
permutation for all $i=1,...,s$ and $t \in T_i$. So $x \in \ker \varphi$.
Clearly $\ker \varphi\leqslant \cap _{i=1}^{s}H_{i}$, $\ker \varphi
\vartriangleleft G$ and $\ker \varphi ^{f_{i}}\leqslant \ker \varphi,\forall
i=1,...,s$.
\end{proof}

\begin{prop} \label{r}
The group $\mathbb{Z}\wr \mathbb{Z}$ is a $3$-state and $3$-letter automata.
\end{prop}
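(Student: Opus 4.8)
The plan is to realise $\mathbb{Z}\wr\mathbb{Z}$ through an explicit $G$-data of orbit-type $(2,1)$ and then invoke Proposition A, so that the automaton of Diagram 1 appears as the resulting state-closed representation. Writing $W=\mathbb{Z}\wr\mathbb{Z}=B\rtimes\langle t\rangle$ with $B=\bigoplus_{k\in\mathbb{Z}}\langle a_k\rangle$ and $a_k=a^{t^k}$, I would take $s=2$, $\mathbf{m}=(2,1)$, the subgroups $H_1=B\rtimes\langle t^2\rangle$ (of index $2$) and $H_2=W$, and the two virtual endomorphisms $f_1\colon H_1\to W$ given by $a_{2j}\mapsto a_j$, $a_{2j+1}\mapsto e$, $t^2\mapsto t$ (a ``decimation''), and $f_2\colon W\to W$ given by $a_k\mapsto t$, $t\mapsto e$ (an augmentation onto $\langle t\rangle$). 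The first routine step is to check that $f_1,f_2$ respect the defining relations of $W$ — only the shift relation $t^{-1}a_kt=a_{k+1}$ and the commutativity of $B$ need to be verified — so that they are genuine homomorphisms.

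Once the data is in place, applying Proposition A yields a state-closed $\varphi\colon W\to\mathcal{A}_3$ whose kernel is the $\mathbf{F}$-core of $\mathbf{H}$. Choosing the transversals $T_1=\{e,t\}$ and $T_2=\{e\}$ and computing the Schreier functions, I would verify that $\varphi(t)=(e,\varphi(t),e)(0\,1)$ and $\varphi(a_0)=(\varphi(a_0),e,\varphi(t))$; that is, $\varphi(t)=\alpha$ and $\varphi(a_0)=\gamma$ are exactly the two automorphisms of Diagram 1. Since the states occurring in $\alpha$ and $\gamma$ lie in $\{e,\alpha,\gamma\}$, the representation is finite-state with three states and three letters, which is the nominal conclusion; the substance of the proposition is that $\varphi$ is \emph{faithful}, i.e. that the $\mathbf{F}$-core is trivial.

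The main obstacle, and the heart of the argument, is thus to show that the largest $K\vartriangleleft W$ with $K\leqslant H_1$, $K^{f_1}\leqslant K$ and $K^{f_2}\leqslant K$ is trivial. I would split into two cases. If $K\cap B\neq 1$, pick $0\neq b\in K\cap B$; since $f_1$ acts on $B$ by keeping the even-indexed coordinates and halving their index while killing the odd ones, iterating $f_1$ compresses the finite support of $b$ until only the coordinate at $0$ can survive, so $a_0^{\,n_0}\in K$, where $n_0$ is the coefficient of $a_0$ in $b$. Applying $f_2$ then puts $t^{\,n_0}\in K$; but $t^{\,n_0}\in K\leqslant H_1$ forces $n_0$ even, and $f_1$ halves that exponent, so $n_0$ would be divisible by every power of $2$, whence $n_0=0$. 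Using normality to translate any coordinate of $b$ into position $0$ then forces $b=0$, a contradiction. If instead $K\cap B=1$, then $K$ embeds into $W/B\cong\mathbb{Z}$, hence is cyclic generated by some $w=bt^N$ with $N\neq 0$; since $w^{a_0}$ has the same $t$-exponent $N$, normality gives $w^{a_0}w^{-1}\in K\cap B=1$, i.e. $a_0$ centralises $w$, contradicting $[a_0,w]=[a_0,t^N]=a_0^{-1}a_N\neq e$. Hence $K=1$, $\varphi$ is faithful, and $\langle\alpha,\gamma\rangle\cong\mathbb{Z}\wr\mathbb{Z}$ is a $3$-state, $3$-letter automata group. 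I expect the bookkeeping in the decimation case — tracking supports under iterated $f_1$ and combining this with $f_2$ and with normality — to be the most delicate part; the alternative derivation via Brunner--Sidki tree-wreathing and $k$-inflation sketched in the Introduction produces the same representation but conceals this faithfulness computation.
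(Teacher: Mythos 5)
Your proof is correct, but it takes a genuinely different route from the one the paper gives for this proposition. The paper's own proof is indirect: it invokes the Brunner--Sidki tree-wreathing theorem of \cite{BS1} to realise $\mathbb{Z}\wr\mathbb{Z}$ as a level-two state-closed subgroup of $\mathcal{A}_2$, passes to a state-closed group of degree $4$ by $2$-inflation, and then writes down an explicit isomorphism of tree automorphism groups that compresses the degree-$4$ generators to the degree-$3$ automaton of Diagram 1; faithfulness is inherited from the cited theorem rather than proved by a core computation. You instead build the $G$-data $((2,1),\{H_1,H_2\},\{f_1,f_2\})$ directly and kill the $\mathbf{F}$-core by hand. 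All your steps check out: the decimation $f_1$ and the augmentation $f_2$ respect the relations of $B\rtimes\langle t\rangle$; iterating $f_1$ on $b\in K\cap B$ collapses the finite support to the $0$-coordinate; the chain $a_0^{n_0}\in K\Rightarrow t^{n_0}\in K\leqslant H_1\Rightarrow 2\mid n_0\Rightarrow t^{n_0/2}\in K$ forces $n_0=0$, and conjugating by powers of $t$ propagates this to every coordinate; the case $K\cap B=1$ is correctly dispatched via $[a_0,w]=a_0^{-1}a_N\neq e$; and your transversal computation reproduces Diagram 1 exactly, with $\varphi(t)=(e,\varphi(t),e)(0\,1)$ and $\varphi(a_0)=(\varphi(a_0),e,\varphi(t))$. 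What your approach buys is a self-contained faithfulness argument with no appeal to \cite{BS1}; what the paper's approach buys is brevity at this point, since it defers the direct core computation to Section 5, where Proposition 5.1 handles the general case $\mathbb{Z}^l\wr\mathbb{Z}^d$ by essentially your method (your $f_1$ and $f_2$ are the $l=d=1$ specialisations of the maps $f_1$ and $f_3$ there, with $f_2$ superfluous when $d=1$, exactly as the paper remarks). In effect you have supplied, at the site of Proposition 3.2, the argument the paper postpones to its ``more general form'' in Section 5.
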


\begin{proof}
First we prove that $\mathbb{Z}\wr \mathbb{Z}$ has a faithful state-closed representation of degree 4 which we then reduced to degree 3.

By Brunner and Sidki \cite[Theorem 1]{BS1}, if $H$ is an abelian subgroup of 
$\mathcal{A}_{2}$ and $\alpha =((e,e),(\alpha ,e))(0\,1)$, then $\langle 
\tilde{H},\alpha \rangle \simeq \tilde{H}\wr \langle \alpha \rangle $, where 
$\tilde{H}=\{\tilde{h}=((\tilde{h},h),(e,e))\mid h\in H\}.$ Note that $%
\tilde{H}\simeq H$. If $H=\langle \alpha \rangle $, then $\tilde{H}\wr
\langle \alpha \rangle =\langle \tilde{\alpha}\rangle \wr \langle \alpha
\rangle \simeq \mathbb{Z}\wr \mathbb{Z}$. Since 
\begin{equation*}
\langle \tilde{\alpha}\rangle \wr \langle \alpha \rangle =\langle \tilde{%
\alpha}=((\tilde{\alpha},\alpha ),(e,e)),\alpha =((e,e),(\alpha
,e))(0\,1)\rangle ,
\end{equation*}%
it follows that $\langle \tilde{\alpha}\rangle \wr \langle \alpha \rangle $
is level two, state-closed group. 


Thus, by $2$-inflation, $\Psi: G \rightarrow \mathcal{A}_4$
\begin{equation*}
(\langle \tilde{\alpha}\rangle \wr \langle \alpha \rangle )^{\Psi }=\langle 
\tilde{\alpha}^{\Psi }=(\tilde{\alpha}^{\Psi },\alpha ^{\Psi },e,e),\alpha
^{\Psi }=(e,e,\alpha ^{\Psi },e)(0,2)(1,3)\rangle \simeq \mathbb{Z}\wr 
\mathbb{Z}
\end{equation*}%
is a faithful state-closed representation of $\mathbb{Z}\wr \mathbb{Z}$ of
degree $4$.

Now note that the map 
\begin{equation*}
\alpha ^{\Psi }\mapsto \alpha _{1}=(e,\alpha _{1},e)(0,1)
\end{equation*}%
\begin{equation*}
\tilde{\alpha}^{\Psi }\mapsto \beta =(\beta ,e,\alpha _{1})
\end{equation*}%
extends to an isomorphism $\phi $ from 
\begin{equation*}
\langle \tilde{\alpha}^{\Psi }=(\tilde{\alpha}^{\Psi },\alpha ^{\Psi
},e,e),\alpha ^{\Psi }=(e,e,\alpha ^{\Psi },e)(0,2)(1,3)\rangle 
\end{equation*}%
to 
\begin{equation*}
\langle \beta =(\beta ,e,\alpha _{1}),\alpha _{1}=(e,\alpha
_{1},e)(0,1)\rangle .
\end{equation*}%
Therefore $\mathbb{Z}\wr \mathbb{Z}$ is a $3$-state and $3$-letter automata, as in Diagram 1.
\end{proof}

We prove in Section 5 a more general form of this proposition.

\section{Proof of Theorem B}


\begin{thmB}
\label{TB} Let $G$ be a self-similar group of degree $m$ and orbit-type $%
\left( m_{1},..,m_{s}\right) $. Then  the following hold.

\begin{itemize}
\item[1)] $G^{(\omega )}$ admits a faithful state-closed representation of
degree $m+1$ and orbit-type $\left( m_{1},..,m_{s},1\right)$; in particular,
for $G=\mathbb{Z}$, the representation of the group $\mathbb{Z}^{(\omega )}$
is of orbit-type $\left( 2,1\right),$ and is in addition finite-state.

\item[2)] Let $K$ be a regular subgroup of $Sym(\{1,...,s\})$. Then the
restricted wreath product $G\wr K$ admits a faithful
state-closed representation of degree $(m_1\cdot m_2\cdot...\cdot m_s) \cdot s$; in particular, the group $(\mathbb{Z} \wr \mathbb{Z})
\wr C_{2}$ is finite-state and self-similar of degree $4$.
\end{itemize}
\end{thmB}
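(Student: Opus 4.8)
The plan is to prove the two items separately, in each case manufacturing the representation from a $G$-data and quoting Proposition~A, so that everything reduces to choosing the right subgroups and virtual endomorphisms and then checking that the associated core is trivial.

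For item (1), I would fix the given self-similar action of $G$ on $\mathcal T(Y)$, $Y=\{0,\dots,m-1\}$, and adjoin one new fixed letter $m$, so that $Y'=Y\cup\{m\}$ carries the orbits $Y_1,\dots,Y_s,\{m\}$. Writing an element of $G^{(\omega)}$ as a finite-support sequence $\mathbf g=(g_1,g_2,\dots)$, let $\iota\colon G\to G^{(\omega)}$ be the first-coordinate embedding, let $\tau\colon G^{(\omega)}\to G^{(\omega)}$ be the shift $(g_1,g_2,\dots)\mapsto(g_2,g_3,\dots)$, and let $g_1|_j$ denote the $j$-th state of $g_1$. I would then define $\varphi\colon G^{(\omega)}\to\mathcal A_{m+1}$ recursively by
\[
\varphi(\mathbf g)=\bigl(\varphi(\iota(g_1|_0)),\dots,\varphi(\iota(g_1|_{m-1})),\ \varphi(\tau\mathbf g)\bigr)\,\widehat{\sigma}(g_1),
\]
where $\widehat\sigma(g_1)$ is $\sigma(g_1)$ acting on $Y$ and fixing $m$. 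In the language of a $G^{(\omega)}$-data this says that on the orbits $Y_1,\dots,Y_s$ the virtual endomorphisms are those of $\mathbf F$ read off the first coordinate, while the endomorphism attached to the fixed letter $m$ is the shift $\tau$.

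The homomorphism property follows by induction on the tree level, using that $\sigma$, $\iota$ and $\tau$ are homomorphisms together with the state-composition rule $(g_1h_1)|_j=(g_1|_j)(h_1|_{j^{\sigma(g_1)}})$; state-closedness is built into the recursion. For faithfulness the key remark is that $\varphi\circ\iota$ reproduces the original action of $G$ on the copy of $\mathcal T(Y)$ formed by the strings that never descend along the letter $m$; hence $\varphi(\mathbf g)=e$ forces $g_1=e$ by reading this ``no-$m$'' part, and since the state of $\varphi(\mathbf g)$ at $m$ is $\varphi(\tau\mathbf g)$, the same argument applied to $\tau\mathbf g$ gives $g_2=e$, and so on, so $\mathbf g=e$. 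The orbit-type is $(m_1,\dots,m_s,1)$ by construction. For $G=\mathbb Z$ with the adding machine ($m=2$) this yields degree $3$, orbit-type $(2,1)$, with $\alpha:=\varphi(\iota(a))=(e,\alpha,e)(0\,1)$ and each further generator of the form $(e,e,\cdot)$; since the states of any element then lie along the single branch $m=2$ and are powers of the adding machine, the representation is finite-state.

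For item (2), write $W=G\wr K=G^{s}\rtimes K$ with $K\le\mathrm{Sym}(\{1,\dots,s\})$ regular (so $|K|=s$). I would take the \emph{single} subgroup $P=\prod_{k=1}^{s}H_k^{(k)}$, where $H_k^{(k)}$ is the given index-$m_k$ subgroup $H_k$ placed in the $k$-th copy $G^{(k)}$, and the single virtual endomorphism $\hat f\colon P\to W$ given by $(h_1,\dots,h_s)\mapsto (h_1^{f_1},\dots,h_s^{f_s})$ (trivial $K$-component). Then $[W:P]=[W:G^{s}]\,[G^{s}:P]=s\cdot\prod_k m_k=(m_1\cdots m_s)\cdot s$, the asserted degree, and Proposition~A (with a single orbit) produces a transitive state-closed representation of $W$ of that degree whose kernel is the $\hat f$-core of $P$. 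The step I expect to be the real obstacle is showing this core is trivial, i.e. that $\hat f$ is simple: given $N\trianglelefteq W$ with $N\le P$ and $N^{\hat f}\le N$, transitivity of $K$ and the fact that it permutes the copies by relabeling force all projections $\pi_k(N)$ to equal one fixed $D\le G$, with $D\trianglelefteq G$ (each $G^{(k)}$ normalizes $N$), $D\le\bigcap_k H_k$ (from $N\le P$) and $D^{f_k}\le D$ for all $k$ (from $N^{\hat f}\le N$). These are precisely the defining conditions of the $\mathbf F$-core of $\mathbf H$, which is trivial because the representation of $G$ is faithful; hence $D=1$ and $N=1$. Finally, since $\mathbb Z\wr\mathbb Z$ is an automata group of degree $3$ by Proposition~\ref{r} (Diagram 1) and $C_2$ is finite, the endomorphisms $f_1,f_2$ are read from a finite-state action, so the images of the finitely many generators of $(\mathbb Z\wr\mathbb Z)\wr C_2$ have finitely many states, giving the claimed finite-state self-similar representation of degree $4$.
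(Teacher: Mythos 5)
Your proposal is correct and follows essentially the same route as the paper: for (1) you adjoin a fixed letter carrying the shift endomorphism while the original orbits carry the $f_i$ applied to the first coordinate, and for (2) you take the product subgroup $\prod_{k}H_k$ with the componentwise endomorphism $(h_1,\dots,h_s)\mapsto(h_1^{f_1},\dots,h_s^{f_s})$ and use transitivity of $K$ to reduce the core to the trivial $\mathbf{F}$-core of $\mathbf{H}$. The only (harmless) deviation is that in (1) your orbit endomorphisms $\iota(h^{f_i})$ discard the tail $(g_2,g_3,\dots)$ whereas the paper's $\bar f_i$ retain it; both choices give a faithful representation of the stated degree and orbit-type.
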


\begin{proof}
Let $G$ be a state-closed group. By Proposition A, there is data $(\mathbf{m}%
,\mathbf{H,F})$ such that $\mathbf{H}$ is $\mathbf{F}$ core-free.

(1) The subgroup $L_{i} = \{(h, g_{2}, g_{3},...) \in G^{(\omega)}
\mid h \in H_{i} \}$ has index $m_{i}$ in $G^{(\omega)}$. For each $i = 1,
..., s$ define the homomorphism $\bar{f}_{i} : L_{i} \rightarrow
G^{(\omega)} $ by 
\begin{equation*}
(h, g_{2}, g_{3},...)^{\bar{f}_{i}} = (h^{f_{i}}, g_{2}, g_{3},...)
\end{equation*}
and the homomorphism $\bar{f}_{s + 1}: L_{s+1} = G^{(\omega)} \rightarrow
G^{(\omega)}$ by 
\begin{equation*}
(g_{1}, g_{2}, ...)^{\bar{f}_{s + 1}} = (g_{2}, g_{3}, ...).
\end{equation*}
It is clear that 
\begin{equation*}
\langle L \leqslant \cap_{i = 1}^{s+1} L_{i} \mid L \vartriangleleft
G^{\omega}, L^{\bar{f}_{i}} \leqslant L, \forall i = 1, ..., s + 1 \rangle
\end{equation*}
is trivial. By Proposition A, the group $G^{(\omega)}$ is state-closed with
 respect to the data $(\mathbf{\bar{m},\bar{H},\bar{F}})$, where $\mathbf{\bar{m%
}} = (m_{1},...,m_{s},1)$, $\mathbf{\bar{H}=}\{L_1,...,L_s, L_{s+1}\}$ and $%
\mathbf{\bar{F}} = \{\bar{f}_{1},...,\bar{f}_{s},\bar{f}_{s+1}\}.$

Note that $f:2\mathbb{Z}%
\rightarrow \mathbb{Z}$  defined by $2n\mapsto n$ is a simple virtual endomorphism and therefore $\mathbb{Z}$ is self-similar of degree $2$.
Thus the group $\mathbb{Z}^{(\omega )}$ is self-similar  with respect to the data $((2, \, 1), \{L_1, L_2\}, \{f_1, f_2\})$ where $f_{1}:(2n_{1},n_{2},...)\mapsto (n_{1},n_{2},...)$ and $%
f_{2}:(n_{1},n_{2},...)\mapsto (n_{2},n_{3},...)$. On defining the transversals $T_{1}=\{e,(1,0,0,...)%
\}$ and $T_{2}=\{e\}$ of $L_{1}$ and $L_{2}$ respectively,  we obtain the following representation of $\mathbb{Z}^{(\omega)}$ 
\begin{equation*}
\langle \alpha _{1}=(e,\alpha _{1},e)(0\,1),\newline
\alpha _{i}=(\alpha _{i},\alpha _{i},\alpha _{i-1})\mid \newline
i=2,3,4,...\rangle
\end{equation*}%
which is faithful and finite-state.
\begin{center}
\begin{tikzpicture}[shorten >=3pt,node distance=2.5cm,on grid,auto] 

  \node[state] (e) [] {$e$};
  \node[state] (a) [right of=e] {$\alpha_1$};
  \node[state] (b) [right of=a] {$\alpha_2$};
  \node[state] (c) [right of=b] {$\alpha_3$};
  \node (d) [right of=c] {$...$};

  \path [->] (e) edge  [loop above]    node {$0|0, 1|1, 2|2$} (e)
        (a) edge              node {$0|1,2|2$} (e)
        (b) edge              node {$2|2$} (a)
        (c) edge              node {$2|2$} (b)
        (d) edge              node {$2|2$} (c)
        (a) edge         [loop above]     node {$1|0$} (a)
        (b) edge      [loop above]        node {$0|0,1|1$} (b)
        (c) edge      [loop above]        node {$0|0,1|1$} (c);

\end{tikzpicture}

Diagram 2
\end{center}

(2) Let $l$ be an integer such that $H_{i} \neq G$ for $1 \leq i \leq l$ and 
$H_{i} = G$ for $l + 1 \leq i \leq s$. Define $H = H_{1} \times ... \times
H_{s}$ and $W = G \wr K$. Then $[W : H] = s(m_{1} ... m_{l})$ and the
endomorphism $f: H \rightarrow W$ given by 
\begin{equation*}
(h_{1}, ..., h_{s}) \mapsto (h_{1}^{f_{1}}, ..., h_{s}^{f_{s}})
\end{equation*}
is well-defined. Let $L$ be a subgroup of $H$, normal in $W$, and $f$%
-invariant, and let $g = (g_{1}, ..., g_{s}) \in L$. Since $K$ is a
transitive group of degree $s$ follows that for each $1 \leq i \neq j \leq s$
there exists $h \in K$ such that $(i)h = j$. But 
\begin{equation*}
g^{h f} = \left(g_{(1)h}, ..., g_{(s)h}\right)^{f} = \left(g_{(1)h}^{f_{1}},
..., g_{(s)h}^{f_{s}}\right) \in L.
\end{equation*}
Thus $g_{r} \in \langle K \leqslant \cap_{i = 1}^{s} H_{i} \mid K
\vartriangleleft G, K^{f_{i}} \leqslant K, \forall i = 1, ..., s \rangle =
\{1\}$ for each $r = 1, ..., s$. Therefore $L = \{1\}$ and $G \wr K$ is self
-similar of degree $s.(m_{1} ... m_{l})$.

On applying Proposition \ref{r} and Proposition A we obtain: 
\begin{equation*}
(\mathbb{Z}\wr \mathbb{Z})\wr C_{2}\simeq \langle \sigma
=(0,2)(1,3),\gamma =(\gamma ,e,\alpha ^{\sigma },\alpha ^{\sigma }),\alpha
=(e,\alpha ,e,e)(0,1)\rangle;
\end{equation*}%
that is, the group $(\mathbb{Z}\wr \mathbb{Z})\wr C_{2}$ is generated by the following $%
5 $-state and $4$-letter automaton:

\begin{center}
    \begin{tikzpicture}[shorten >=3pt,node distance=3.7cm,on grid,auto] 

  \node[state] (e) [] {$e$};
  \node[state] (a) [below left of=e] {$\alpha$};
  \node[state] (g) [below right of=e] {$\gamma$};
  \node[state] (p) [below right of=a] {$\alpha^{\sigma}$};
  \node[state] (s) [left of=e] {$\sigma$};

  \path [->] (e) edge        [loop above]       node {$0|0, 1|1, 2|2, 3|3$} (e)
        (s) edge              node {$0|2,1|3,2|0,3|1$} (e)
        (a) edge              node {$0|1,2|2,3|3$} (e)
        (a) edge        [loop left]       node {$1|0$} (a)
        (g) edge          [loop right]     node {$0|0$} (g)
        (g) edge              node [swap]  {$2|2,3|3$} (e)
        (p) edge              node {$3|2$} (a)
        (g) edge              node  {$1|1$} (p)
        (p) edge              node [swap] {$0|0, 1|1, 2|3$} (e);

\end{tikzpicture}

Diagram 3
\end{center}

\end{proof}

\section{Proof of Theorem C}

First we will prove the second case of Theorem C.

\begin{prop}
\label{P3.1} Let $G=\mathbb{Z}^{l}\wr \mathbb{Z}^{d}$. Then $G$ is an
automata group of degree $4$. In case $d=1$, the degree is $3$.
\end{prop}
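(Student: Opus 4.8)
The plan is to build the representation directly from a $G$-data and invoke Proposition A, so that degree $=m_{1}+\dots +m_{s}$ is read off from the chosen subgroups and the only substantive tasks are (i) showing the $\mathbf{F}$-core is trivial, which gives faithfulness, and (ii) exhibiting finitely many states, which gives the automata property. Write $G=\mathbb{Z}^{l}\wr \mathbb{Z}^{d}=B\rtimes T$ with base $B=(\mathbb{Z}^{l})^{(T)}$ and top $T=\mathbb{Z}^{d}$, and fix generators $a_{1},\dots ,a_{l}$ of the lamp at the origin together with $t_{1},\dots ,t_{d}$ of $T$. Following the pattern of Proposition \ref{r} (which is exactly the case $l=d=1$) and of Theorem D, I realize one distinguished direction $\langle t_{1}\rangle$ as a binary adding machine, so that it acts transitively on a two-letter orbit $Y_{1}=\{0,1\}$, giving $H_{1}=Fix_{G}(0)$ of index $2$ and $f_{1}$ the $0$-projection. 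The remaining data consists of fixed letters, $(H_{2}=G,f_{2})$ carrying the lamp group and, when $d\geq 2$, $(H_{3}=G,f_{3})$ carrying the residual directions $t_{2},\dots ,t_{d}$. This yields orbit type $(2,1,1)$ and degree $4$; when $d=1$ the third datum is empty, the corresponding letter is superfluous, and one obtains orbit type $(2,1)$ and degree $3$, recovering Diagram 1 for $l=1$.

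The virtual endomorphism $f_{1}$ is built from a simple virtual endomorphism of $T=\mathbb{Z}^{d}$: a rational $d\times d$ matrix $M$ with $|\det M|=\tfrac12$, irreducible characteristic polynomial (which forces simplicity, since no proper rational invariant subspace exists and $\det M\notin \mathbb{Z}$ rules out an invariant full lattice), and spectral radius $<1$ (which is what makes the resulting $\mathbb{Z}^{d}$-representation finite-state). Concretely $f_{1}$ halves the top coordinate through $M$ and simultaneously rescales the lamp positions, exactly as $\alpha _{1}^{2}\mapsto \alpha _{1}$ and $\beta ^{\alpha _{1}^{2k}}\mapsto \beta ^{\alpha _{1}^{k}}$ in the $\mathbb{Z}\wr \mathbb{Z}$ computation of Proposition \ref{r}. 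The lamp generators $a_{1},\dots ,a_{l}$ are then realized not by a separate contraction but by depositing $l$ \emph{distinct} states at the storage letter, so that faithfulness of the base $B$ rides on faithfulness of the adding machine (which separates lamp positions) together with these deposits (which separate the $l$ lamp coordinates); the generators $t_{2},\dots ,t_{d}$ are encoded similarly at the second storage letter, with their action as shifts of the lamp configuration carried inside their recursion.

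The main obstacle is exactly this last point: arranging that the residual directions $t_{2},\dots ,t_{d}$, although sitting at a fixed letter, act on the lamps as genuine $\mathbb{Z}^{d-1}$-translations in a way that is consistent with the tree structure imposed by $t_{1}$, commutes correctly, and still produces only finitely many states. This is precisely what distinguishes $d\geq 2$ from $d=1$ and forces the extra letter. Granting this, faithfulness follows by computing the $\mathbf{F}$-core $\langle K\leqslant H_{1}\cap H_{2}\cap H_{3}\mid K\vartriangleleft G,\ K^{f_{i}}\leqslant K\rangle$: any such $K$ meets $B$ in a translation-invariant submodule stable under the contraction $f_{1}$, which the condition $|\det M|<1$ forces to vanish, so $K$ embeds into $T$ as an $M$-invariant sublattice and hence is trivial by simplicity of $M$. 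The finite-state property is then the routine verification that, because $M$ is contracting and the lamp deposits are drawn from a fixed finite set, the coordinates and lamp values met along any branch take only finitely many values, so $Q(\alpha )$ is finite for each generator and $G$ is an automata group of degree $4$, respectively $3$ when $d=1$.
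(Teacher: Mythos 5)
Your skeleton is the same as the paper's: a $G$-data of orbit-type $(2,1,1)$ with $H_1$ of index $2$ and $H_2=H_3=G$, fed into Proposition A, with the third datum dropped when $d=1$. But there are two genuine gaps. The first and most serious is your faithfulness argument. You claim that $K\cap B$ is ``a translation-invariant submodule stable under the contraction $f_1$, which the condition $|\det M|<1$ forces to vanish.'' This is false: the determinant condition lives on the top group $\mathbb{Z}^{d}$ and says nothing about the base module. Already for $l=d=1$, with base $\mathbb{Z}[x^{\pm 1}]$ and $f_1$ the halving map $\sum c_nx^n\mapsto\sum c_{2n}x^n$, the ideal $(2)$ is translation-invariant, nonzero, and satisfies $(2)^{f_1}\subseteq (2)$; so a contraction on the top plus translation-invariance cannot by themselves kill the core. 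This is precisely why the paper's third virtual endomorphism is not a carrier of the residual directions $t_2,\dots,t_d$ (as in your setup) but an augmentation-type map $f_3\colon a_1^{p(x_1,\dots,x_d)}\mapsto x_1^{p(1,\dots,1)}$, $X\mapsto\{e\}$, which moves constant lamp exponents out of the base and into the top. The paper's core computation is then a genuine multi-step descent: rotate the variables with $f_2\colon x_i\mapsto x_{i-1}$, reduce the $x_1$-degree with $f_1$ (after conjugating by $x_1$ in the odd case), reduce the number of variables by minimality, and finally kill the constant case with $f_3$. Nothing in your proposal substitutes for this.

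The second gap you flag yourself: you name the consistency and finite-stateness of the residual directions as ``the main obstacle'' and then proceed by ``granting this.'' The paper avoids the obstacle entirely by not using an irreducible contracting matrix $M$ at all: only the single coordinate $x_1$ is halved ($f_1\colon x_1^2\mapsto x_1$, $x_i\mapsto x_i$ for $i\geq 2$, with lamps at odd $x_1$-positions killed and the lamp coordinates cyclically shifted), while the other top directions are handled by the cyclic rotation $f_2$ sitting at a fixed letter. With these explicit maps the generators become the concrete finite automaton
\begin{equation*}
\gamma_1=(\gamma_l,e,\gamma_1,\alpha_1),\ \dots,\ \gamma_l=(\gamma_{l-1},e,\gamma_l,e),\quad
\alpha_1=(e,\alpha_1,\alpha_d,e)(0\,1),\ \dots,\ \alpha_d=(\alpha_d,\alpha_d,\alpha_{d-1},e),
\end{equation*}
and the finite-state check really is routine. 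In your version, where $M$ mixes all $d$ coordinates and the lamp positions are ``rescaled'' by $M$, neither the well-definedness of the lamp part of $f_1$ nor the finiteness of the state set is established, and the simplicity of $M$ on $\mathbb{Z}^{d}$ does not transfer to triviality of the $\mathbf{F}$-core of the wreath product. As written, the proposal does not prove the proposition.
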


\begin{proof}
Denote $\mathbb{Z}^{l}$ by $A$ and $\mathbb{Z}^{d}$ by $X$. \ Then, the normal closure of $A$ in $G$ is $A^X$ and  we have
a semi-direct product form for the group $G=A^{X}\mathbb{\cdot }X$ .

Define the subgroups%
\begin{eqnarray*}
H_{1} &=&A^{X}\langle
x_{1}^{2},x_{2},...,x_{d}\rangle , \\
H_{2} &=&H_{3}=G,\text{ }\bigcap_{i=1}^{3}H_{i}=H_{1}\text{.}
\end{eqnarray*}%
Also, for $i=1,2,3$, define the homomorphisms $f_{i}:H_{i}\rightarrow G$
which extend the maps:%
\begin{eqnarray*}
f_{1} &:&\,x_{1}^{2}\mapsto x_{1},\,x_{i}\mapsto x_{i}\text{ }\left( 2\leq
i\leq d\right) , \\
a_{i}^{x_{1}^{2n}q\left( x_{2},x_{3}...,x_{d}\right) } &\mapsto
&a_{i-1}^{x_{1}^{n}q\left( x_{2},x_{3}...,x_{d}\right) },\text{ }%
a_{i}^{x_{1}^{2n+1}q\left( x_{2},x_{3}...,x_{d}\right) }\mapsto e\text{ };
\end{eqnarray*}


\begin{eqnarray*}
f_{2} &:&\,x_{i}\mapsto x_{i-1}\text{ }\left( 1\leq i\leq d\right) , \\
a_{i}^{p\left( x_{1},x_{2}...,x_{d}\right) } &\mapsto &a_{i}^{p\left(
x_{d},x_{1}...,x_{d-1}\right) }\text{ }\left( 1\leq i\leq l\right) ;
\end{eqnarray*}

and%
\begin{eqnarray*}
f_{3} &:&X\mapsto \left\{ e\right\} \text{ }, \\
a_{1}^{p\left( x_{1},x_{2}...,x_{d}\right) } &\mapsto &x_{1}^{p\left(
1,1...,1\right) },\text{ } \\
a_{i}^{p\left( x_{1},x_{2}...,x_{d}\right) } &\mapsto &e\text{ }\left( 2\leq
i\leq l\right)
\end{eqnarray*}

Then, $A^{ X }\langle
x_{2},...,x_{d}\rangle $ is the $f_{1}$-core of $H_{1}$ and both $H_{2}$, $%
H_{3}$ are their own $f$-cores.

Let $K$ be the $\mathbf{F}$\textbf{-}core of $\mathbf{H}$. Then $K\leq A^{ X }\langle x_{2},...,x_{d}\rangle $ and
by applying $f_{2}$ we find $K\leq A^{ X} $.

Suppose $K$ is non-trivial. Then, as $K$ is normal in $G$ it follows that $%
K_{+}=K\cap A^{\mathbb{Z}\left[ X\right] }$ is non-trivial. Elements $h$ of $%
K_{+}$ have an unique form 
\begin{equation*}
h=a_{1}^{p_{1}}a_{2}^{p_{2}}...a_{l}^{p_{l}}
\end{equation*}%
where $p_{i}=p_{i}(x_{1},x_{2}...,x_{d})$ $\in \mathbb{Z}\left[ X\right] $.
Define $\delta _{x_{j}}(h) $ to be the maximum $x_{j}$-degree of $p_{i}$ for
all $i$. If the maximum of all $\delta _{x_{j}}(h)$ occurs for $j=k$ then by
applying an adequate power of $f_{2}$ to $h$, we may assume $j=1$.

Choose an $h\in K_{+}$ such that $h\not=e$ and which involves a minimum
number of variables from $\left\{ x_{1},x_{2}...,x_{d}\right\}$. \newline

\noindent (1) Suppose $p_{i}$ is constant for all $i$. There exists $j$ such
that $p_{j}\not=0$. Then on applying $f_{3}$ to $h$ we get $%
x_{1}^{p_{1}}\not=e\in K$ which is impossible. \newline

\noindent (2) Write $\delta _{x_{1}}\left( h\right) =n$ then $n\not=0$. 
\newline

\noindent (2.1) Suppose $n=2k$. Then on applying $f_{1}$ to $h$ we obtain $%
h^{\prime }\not=e$ and $\delta _{x_{1}}\left( h^{\prime }\right) =k$ which
is absurd. \newline

\noindent (2.2) Suppose $\delta _{x_{1}}\left( h\right) =2k+1$. Then
conjugate by $x_{1}$ to get%
\begin{eqnarray*}
h^{\prime } &=&h^{x_{1}}=a_{1}^{p_{1}^{\prime }}a_{2}^{p_{2}^{\prime
}}...a_{l}^{p_{l}^{\prime }}\in K_{+}, \\
p_{i}^{\prime } &=&p_{i}x_{1},\text{ }\delta _{x_{1}}\left( h^{\prime
}\right) =2k+2\text{.}
\end{eqnarray*}%
On applying $f_{1}$ to $h^{\prime }$ we get $h^{\prime \prime }\in K_{+}$
with $\delta _{x_{1}}\left( h^{\prime \prime }\right) =k+1$. Now, $k+1<2k+1$%
, unless $k=0$; that is, we have $\delta _{x_{1}}\left( h\right) =1$ and for
all $i$, 
\begin{equation*}
p_{i}=p_{i0}+p_{i1}x_{1}
\end{equation*}%
where $p_{i0},p_{i1}\in $ $\mathbb{Z}\left[ x_{2}...,x_{d}\right] $. Then,
as before, for $h^{\prime }=h^{x_{1}}$ we have $p_{i}^{\prime
}=p_{i0}x_{1}+p_{i1}x_{1}^{2}$ and $h^{\prime \prime }=\left(
a_{1}^{p_{11}}a_{2}^{p_{21}}...a_{l}^{p_{l1}}\right) ^{x_{1}}$ $\in
K_{+}\backslash \left\{ e\right\} $. Since 
\begin{equation*}
h^{\prime \prime \prime }=\left( h^{\prime \prime }\right)
^{x_{1}^{-1}}=a_{1}^{p_{11}}a_{2}^{p_{21}}...a_{l}^{p_{l1}}\in
K_{+}\backslash \left\{ e\right\} \text{ }
\end{equation*}%
which involves a lesser number of variables than $h$, we have a
contradiction.

With notation of Proposition \ref{P3.1}, a faithful self-similar
representation of $G=\mathbb{Z}^{l}\wr \mathbb{Z}^{d}$ with respect the data%
\begin{equation*}
((2,1,1),\{H_{1},\text{ }H_{2} = G, H_{3}=G\},\{f_{1},f_{2},f_{3}\})
\end{equation*}%
and the transversals $T_{i}$ of $H_{i}$ in $G$ defined by 
\begin{equation*}
T_{1}=\{e,x_{1}\},T_{2}=T_{3}=\{e\}\text{,}
\end{equation*}%
is 
\begin{equation*}
G^{\varphi }=\langle \gamma _{1},...,\gamma _{l}\rangle \wr \langle \alpha
_{1},...,\alpha _{d}\rangle 
\end{equation*}%
where
\begin{equation*}
\gamma _{1}=(\gamma _{l},e,\gamma _{1},\alpha _{1}),\,\gamma _{2}=(\gamma
_{1},e,\gamma _{2},e),...,\gamma _{l}=(\gamma _{l-1},e,\gamma _{l},e),
\end{equation*}%
\begin{equation*}
\alpha _{1}=(e,\alpha _{1},\alpha _{d},e)(0\,1),\,\alpha _{2}=(\alpha
_{2},\alpha _{2},\alpha _{1},e),...,\alpha _{d}=(\alpha _{d},\alpha
_{d},\alpha _{d-1},e).
\end{equation*}%
Therefore, $G^{\varphi }$ is finitely generated, finite-state and
self-similar; that is, $G~$is an automata group. We note that if $d=1$ then $%
H_{2}$ and $f_{2}$ are superfluous.
\end{proof}

\subsection{\noindent \textbf{General concatenation}. }

Let $G_{1}=A_{1}\wr U$, $G_{2}=A_{2}\wr U$ and $G=\left( A_{1}\oplus
A_{2}\right) \wr U$. For $i=1,2$, define the $G_{i}$-data $\left( \mathbf{%
m_{i},H_{i},F_{i}}\right) $ , where $\mathbf{m_{i}=}\left(
m_{i1},...,m_{is_{i}}\right) $, $\mathbf{H_{i}}=\{H_{i1},...,H_{is_{i}}\}$
and $\mathbf{F_{i}}=\{f_{i1},...,f_{is_{i}}\}$. Furthermore, define the data 
$G$-data $\left( \mathbf{m,H,F}\right) $ , where $\mathbf{m}$ is the
concatenation $\left( \mathbf{m_{1},m_{2}}\right) $, 
\begin{eqnarray*}
\mathbf{H} &\mathbf{=}&\left\{ \tilde{H}_{1j}=\left( {A_{2}}^{U}\right)
\cdot H_{1j}\text{ }\left( 1\leq j\leq s_{1}\right) \right\}  \\
&&\cup \left\{ \tilde{H}_{2k}=\left( {A_{1}}^{U}\right) \cdot H_{2k}\text{ }%
\left( 1\leq k\leq s_{2}\right) \right\} \text{,}
\end{eqnarray*}%
\begin{equation*}
\mathbf{F=}\{\tilde{f}_{11},...,\tilde{f}_{1s_{1}},\tilde{f}_{21},...,\tilde{%
f}_{2s_{2}}\}
\end{equation*}%
where $\tilde{f}_{1j}:\tilde{H}_{1j}\rightarrow G, \,\, 1\leq j \leq s_1,$ is defined by 
\begin{equation*}
\tilde{f}_{1j}:ah\mapsto h^{f_{1j}}\text{, for }a\in {A_{2}}^{U},h\in H_{1j}
\end{equation*}%
and $\tilde{f}_{2k}:\tilde{H}_{2k}\rightarrow G, \,\, 1\leq k \leq s_2,$ is defined by 
\begin{equation*}
\tilde{f}_{2k}:ah\mapsto h^{f_{2k}}\text{, for }a\in {A_{1}}^{U}\text{, }%
h\in H_{2k}\text{.}
\end{equation*}

For $i=1,2$, let $G_{i}$ has its state-closed representation with respect
to $\left( \mathbf{m_{i},H_{i},F_{i}}\right) $ with  $\mathbf{F_{i}}$ core $%
K_{i}$. Likewise, let $G$ has its state-closed representation with respect
to $\left( \mathbf{m,H,F}\right) $ with $\mathbf{F}$-core $K$.

\begin{prop}
\label{P3.2} Maintaining the above notation:

\begin{enumerate}
\item 
\begin{equation*}
K\cap \left( A_{1}\oplus A_{2}\right) ^{U}=K_{1}\left( A_{2}\right) ^{U}\cap
K_{2}\left( A_{1}\right) ^{U}\text{;}
\end{equation*}

\item Suppose the above state-closed representations of $G_{1}$ and $G_{2}$
are faithful. Then so is the corresponding state-closed representation of $G$;

\item If $G_{1}$ and $G_{2}$ are finite-state then $G$ is also finite-state.
\end{enumerate}
\end{prop}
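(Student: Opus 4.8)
The plan is to route everything through the two coordinate retractions of $G=(A_1\oplus A_2)^{U}\rtimes U$ onto its factors. Write $M=(A_1\oplus A_2)^{U}$ for the base group and let $p_1\colon G\to G_1$, $p_2\colon G\to G_2$ be the projections with kernels $A_2^{U}$ and $A_1^{U}$ respectively; both are retractions onto the subgroups $G_1,G_2\leqslant G$, we have $\tilde H_{1j}=A_2^{U}H_{1j}=p_1^{-1}(H_{1j})$ and $\tilde H_{2k}=p_2^{-1}(H_{2k})$, and $p_1^{-1}(K_1)=K_1A_2^{U}$, $p_2^{-1}(K_2)=K_2A_1^{U}$. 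The structural fact driving the whole argument is that each concatenated endomorphism factors through a retraction: on $\tilde H_{1j}$ one has $\tilde f_{1j}=(p_1|_{\tilde H_{1j}})f_{1j}$, with image inside $G_1$, and symmetrically $\tilde f_{2k}=(p_2|_{\tilde H_{2k}})f_{2k}$ with image inside $G_2$. This reduces the computation of $K$ to tracking how it meets the two factors.

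For the inclusion $\subseteq$ in item (1) I would show $p_1(K)\leqslant K_1$ and $p_2(K)\leqslant K_2$ using the maximality built into Proposition A. Indeed $p_1(K)$ is normal in $G_1$, being the image of a normal subgroup under a surjection; it lies in $p_1(\bigcap_j\tilde H_{1j})=p_1\bigl(p_1^{-1}(\mathcal H_1)\bigr)=\mathcal H_1=\bigcap_jH_{1j}$; and it is $f_{1j}$-invariant, since for $x\in K\subseteq\tilde H_{1j}$ the element $p_1(x)^{f_{1j}}=x^{\tilde f_{1j}}$ lies in $K$ and, being already in $G_1$, equals $p_1(x^{\tilde f_{1j}})\in p_1(K)$. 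As $K_1$ is the largest such subgroup, $p_1(K)\leqslant K_1$, and likewise $p_2(K)\leqslant K_2$. Hence $K\leqslant p_1^{-1}(K_1)\cap p_2^{-1}(K_2)=K_1A_2^{U}\cap K_2A_1^{U}$, which after intersecting with $M$ yields one inclusion of item (1). This already settles item (2): when the given representations of $G_1,G_2$ are faithful we have $K_1=K_2=\{1\}$, whence $K\leqslant A_2^{U}\cap A_1^{U}=\{1\}$, so the concatenated representation of $G$ is faithful.

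For the reverse inclusion in item (1) I would check that $R:=p_1^{-1}(K_1)\cap p_2^{-1}(K_2)$ is admissible for the $\mathbf F$-core, i.e.\ normal in $G$, contained in $\bigcap\tilde H$, and $\mathbf F$-invariant, so that $R\leqslant K$ by maximality. Normality is clear, and $R\leqslant p_1^{-1}(\mathcal H_1)\cap p_2^{-1}(\mathcal H_2)=\bigcap_j\tilde H_{1j}\cap\bigcap_k\tilde H_{2k}=\bigcap\tilde H$ is immediate. The delicate point, and the main obstacle, is $\mathbf F$-invariance: for $x\in R$ one computes $x^{\tilde f_{1j}}=p_1(x)^{f_{1j}}\in K_1$, which controls the $p_1$-coordinate but leaves the $p_2$-coordinate, namely the $U$-part of an element of $G_1$, to be accounted for. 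This is exactly where one uses that each core lies in its own base group, $K_1\leqslant A_1^{U}$ and $K_2\leqslant A_2^{U}$: the stray $U$-part is then trivial, $R$ is identified with $K_1\oplus K_2\leqslant M$, and the invariance check closes. With $R\leqslant K$ and $R\leqslant M$ established, the first inclusion gives the identity $K\cap M=K_1A_2^{U}\cap K_2A_1^{U}$ of item (1).

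Finally, for item (3) I would take the transversals of the $\tilde H_{1j}$ and $\tilde H_{2k}$ in $G$ to be lifts of transversals $T_{1j}$ of $H_{1j}$ in $G_1$ and $T_{2k}$ of $H_{2k}$ in $G_2$, which is legitimate since $[G:\tilde H_{1j}]=[G_1:H_{1j}]$ and $[G:\tilde H_{2k}]=[G_2:H_{2k}]$. In the recursive Kaloujnine-Krasner representation attached to $(\mathbf m,\mathbf H,\mathbf F)$, each development step replaces a state by Schreier values to which one applies $\tilde f_{1j}$ or $\tilde f_{2k}$; by the factorization above this amounts to applying $f_{1j}$ (resp.\ $f_{2k}$) to a $G_1$- (resp.\ $G_2$-)coordinate. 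Consequently the set of states of any generator of $G$ embeds into a set assembled from the state sets produced by the given representations of $G_1$ and $G_2$; as those are finite by hypothesis, the state set of $G$ is finite and $G$ is finite-state.
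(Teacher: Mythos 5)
Your overall strategy --- pushing everything through the two coordinate retractions $p_1,p_2$ of $G$ onto $G_1,G_2$ and invoking the maximality characterization of the cores --- is sound and close in spirit to the paper's, which instead introduces the two partial cores $R_i=\langle S\leqslant\bigcap_j\tilde H_{ij}\mid S\vartriangleleft G,\ S^{\tilde f_{ij}}\leqslant S\rangle$ and asserts $K=R_1\cap R_2$ and $R_i\cap(A_1\oplus A_2)^U=K_iA_{3-i}^U$ without further argument. Your verification that $p_i(K)\leqslant K_i$ is complete and correctly yields $K\leqslant K_1A_2^U\cap K_2A_1^U$. For item (2) your route is genuinely different from, and cleaner than, the paper's: the paper only extracts $K\cap(A_1\oplus A_2)^U=1$ from item (1) and must then use normality of $K$ together with the fact that the base group of a wreath product contains its own centralizer to conclude $K=1$, whereas you get $K\leqslant p_1^{-1}(1)\cap p_2^{-1}(1)=A_2^U\cap A_1^U=1$ directly, with no centralizer argument. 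Item (3) is at the same (rather sketchy) level of detail as the paper's two-sentence justification.

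The one genuine soft spot is the reverse inclusion in item (1). You rightly identify that the $\mathbf F$-invariance of $R=p_1^{-1}(K_1)\cap p_2^{-1}(K_2)$ hinges on the $U$-parts of elements of $K_1$ and $K_2$ being trivial, but you then simply assert $K_1\leqslant A_1^U$ and $K_2\leqslant A_2^U$. This does not follow from the stated hypotheses: the $\mathbf F_i$-core of $\mathbf H_i$ is only required to be normal in $G_i$, contained in $\bigcap_jH_{ij}$ and $\mathbf F_i$-invariant, and nothing prevents it from meeting $U$ (for instance when some $H_{ij}=G_i$ and $f_{ij}$ is the identity). Without that containment, $R$ need not be $\mathbf F$-invariant, and the right-hand side $K_1A_2^U\cap K_2A_1^U$ need not even lie in $(A_1\oplus A_2)^U$, so the asserted equality can fail. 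To be fair, the paper's own proof of (1) is a bald assertion of the analogous identities and implicitly needs the same containment; and the gap is harmless both for item (2) and for the application to Theorem C, where $K_1=K_2=1$. Still, to establish (1) in the generality stated you should either prove $K_i\leqslant A_i^U$ from an explicit additional hypothesis or build that hypothesis into the statement.
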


\begin{proof}
Define for $i=1,2$,%
\begin{equation*}
R_{i}=\langle S\leqslant \cap _{j=1}^{s_{i}}H_{ij}\mid S\vartriangleleft G,%
\text{ }S^{\widetilde{f}_{ij}}\leqslant S,\forall j=1,...,s_{i}\rangle ,
\end{equation*}

\begin{enumerate}
\item We have 
\begin{eqnarray*}
K &=&R_{1}\cap R_{2}, \\
R_{1}\cap \left( A_{1}\oplus A_{2}\right) ^{U} &=&K_{1}\left( A_{2}\right)
^{U}, \\
R_{2}\cap \left( A_{1}\oplus A_{2}\right) ^{U} &=&K_{2}\left( A_{1}\right)
^{U}, \\
K\cap \left( A_{1}\oplus A_{2}\right) ^{U} &=&K_{1}\left( A_{2}\right)
^{U}\cap K_{2}\left( A_{1}\right) ^{U}\text{.}
\end{eqnarray*}

\item As $K_{1}=K_{2}=\left\{ e\right\} $, we find%
\begin{equation*}
K\cap \left( A_{1}\oplus A_{2}\right) ^{U}=\left( A_{2}\right) ^{U}\cap
\left( A_{1}\right) ^{U}=\left\{ e\right\} \text{.}
\end{equation*}%
Now since both $K$ and $\left( A_{1}\oplus A_{2}\right) ^{U}$ are normal
subgroups of $G$, it follows that $\,K$ centralizes $\left( A_{1}\oplus
A_{2}\right) ^{U}$. However $\left( A_{1}\oplus A_{2}\right) ^{U}$ contains
its own centralizer in $G$. Thus, $K=K\cap \left( A_{1}\oplus A_{2}\right)
^{U}$ which is trivial by (1). \\ 

\item There exist transversals of $H_{11}, ..., H_{1s_{1}}$ in $G_{1}$ and of $H_{21}, ...,$ $H_{2s_{2}}$ in $G_{2}$ such that $G_{1}$ and $G_{2}$ are finite-state. These transversals also induce a finite-state representation of $G$. 
\end{enumerate}
\end{proof}

Now Theorem C follows directly from the above proposition
and Proposition \ref{P3.1}. \\

\noindent \textit{\textbf{Question 1}. Is there a faithful state-closed
representation of degree }$3$\textit{\ for the group }$\mathbb{Z}^{l}\wr 
\mathbb{Z}^{d}$\textit{\ when }$l, d\geq 2$\textit{?}

\section{Proof of Theorem D}

We start with the following observation. Let $k$ be a field and $X = \langle x, y \rangle \simeq \mathbb{Z}^{2}$. Consider the following equivalence relation on the $k$-algebra $k[X]$ defined by: 
\begin{equation*}
p(x,y)\sim q(x,y)\text{ iff }q(x,y)= u p(x,y),\text{ where }u\text{ is a unit of }k[X].
\end{equation*}%
 Let $p'(x,y)= x^s.y^t.p(x,y)$, where $s,t \geq 0$ minimal such that $p'(x,y)$ in $k[x,y]$. Let $m, n$ be respectively the $x$-degree and $y$-degree of $p'(x,y)$ and define $\delta(p(x,y)) = (m, n)$. Then, for $p(x,y), q(x,y)$ non-invertible elements of $k[X]$, $\delta(p(x,y)q(x,y)) \geq \delta( p(x,y)), \delta(q(x,y))$.  Let $p_i(x,y)$ be a sequence of non-invertible elements of $k[X]$ where $i \geq 0$ and let $\delta( p_i(x,y)) = (m_i, n_i)$. Suppose $m_i, n_i \rightarrow \infty$ as $i \rightarrow \infty$. Then, the ideal $\bigcap_{i=0}^{\infty }\langle
p_{i}(x,y)\rangle $ is null.

\begin{thmD}
\label{TC copy(1)} Let $p$ a prime number then $C_{p}\wr \mathbb{Z}^{2}$ is
a self-similar group of degree $p+1$ of orbit-type $%
\left( p,1\right)$. Indeed, $C_{p}\wr \mathbb{Z}^{2}$ is generated by $\alpha = (\alpha, \alpha \sigma, ..., \alpha \sigma^{p-1}, \alpha \beta)$, $\sigma = (e, ..., e, \sigma)(0 \, 1 \, ... p-1)$ and $\beta = (e, ..., e, \alpha)$. In particular, the group $C_{2}\wr \mathbb{Z}^{2}$ is
self-similar of degree $3$.
\end{thmD}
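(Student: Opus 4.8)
The plan is to exhibit $G=C_{p}\wr\mathbb{Z}^{2}$ as a state-closed group by producing a $G$-data of orbit-type $(p,1)$ and invoking Proposition~\ref{TA}; the explicit automorphisms $\alpha,\sigma,\beta$ will then be read off from the coordinate formula $g\mapsto(\theta_i(g,t)^{f_i\varphi})\,g^{\sigma}$ once transversals are fixed. Realize $G=A\rtimes X$ with $X=\langle x,y\rangle\simeq\mathbb{Z}^{2}$ and base group $A=C_p^{(X)}$, which I identify with the group algebra $R=\mathbb{F}_p[X]=\mathbb{F}_p[x^{\pm1},y^{\pm1}]$ (an $R$-module under translation, hence a faithful $X$-module). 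For the size-$p$ orbit take $H_1=A\rtimes\langle x^{p},y\rangle$, so $[G:H_1]=p$, with virtual endomorphism $f_1$ the projection onto the $0$-branch; concretely $f_1$ is the \emph{$p$-section} in the first direction, sending $x^{p}\mapsto x$, $y\mapsto y$ and, on $R$, killing every monomial whose $x$-exponent is not divisible by $p$ while dividing the surviving exponents by $p$. For the fixed point take $H_2=G$ with the endomorphism $f_2$ read off the last branch, namely $\sigma\mapsto\sigma$, $\alpha\mapsto\alpha\beta$, $\beta\mapsto\alpha$; on the base group $f_2$ performs the analogous reduction in the second direction and, crucially, feeds the origin lamp out into $X$. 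Choosing $T_1=\{e,x,\dots,x^{p-1}\}$ and $T_2=\{e\}$ and expanding the recursion reproduces exactly the stated $\alpha$, $\sigma$, $\beta$; a direct check that these three recursions respect the defining relations of $C_p\wr\mathbb{Z}^2$ gives the surjection $C_p\wr\mathbb{Z}^2\twoheadrightarrow\langle\alpha,\sigma,\beta\rangle$.

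By Proposition~\ref{TA} the kernel of this representation is the $\mathbf{F}$-core $K$, the largest $K\leqslant H_1$ with $K\vartriangleleft G$, $K^{f_1}\leqslant K$ and $K^{f_2}\leqslant K$; faithfulness is the claim $K=\{e\}$, and this is where the work lies. First reduce to the base group: since $A\vartriangleleft G$ and $X$ acts faithfully on $A=R$, one has $C_G(A)=A$, so if $K\ne\{e\}$ then $K\cap A\ne\{e\}$ (otherwise $[K,A]\leqslant K\cap A=\{e\}$ forces $K\leqslant C_G(A)=A$). Thus $I:=K\cap A$ is a nonzero ideal of $R$: it is an $\mathbb{F}_p$-subspace closed under multiplication by the units $x^{\pm1},y^{\pm1}$ because $K$ is normal and conjugation by $X$ is multiplication in $R$. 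Moreover $I$ is invariant under the ring maps induced by $f_1$ and $f_2$.

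Now run a descent on the bidegree $\delta$ from the observation preceding the statement, modelled on the argument of Proposition~\ref{P3.1}. Take $0\ne h\in I$; after multiplying by suitable units $x^{\pm1},y^{\pm1}$ normalize $h$ so that its lowest $x$-exponent is $0$. Applying $f_1$ then yields a nonzero element of $I$ whose $x$-degree is at most $\lfloor\delta_x(h)/p\rfloor$, and the troublesome residue cases are handled exactly as in cases (2.1) and (2.2) of Proposition~\ref{P3.1}: when $\delta_x(h)=1$ one first conjugates by $x$ and then applies $f_1$ to strictly decrease the number of variables involved. Using $f_2$ to run the same reduction in the $y$-direction, one drives both coordinates of $\delta$ to $0$, so $I$ contains a monomial, i.e. a unit, whence $I=R$ and $A\leqslant K$. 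But $f_2$ sends the origin lamp into $X\setminus\{e\}$, so $A$ is not $f_2$-invariant, and $A\leqslant K\leqslant H_1$ with $K^{f_2}\leqslant K$ is then impossible; equivalently, the observation gives $I\subseteq\bigcap_i\langle p_i\rangle=\{0\}$ for the polynomials $p_i$ of growing bidegree produced by the $f_i^{-1}$-expansions. Either way $I=\{e\}$, hence $K=\{e\}$ and $\varphi$ is faithful. The case $p=2$ specializes to $C_2\wr\mathbb{Z}^2$ of degree $3$.

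\textbf{Main obstacle.} The delicate point is the faithfulness step: controlling how the $p$-section $f_1$ interacts with the bidegree in two variables, disposing of the $\delta_x=1$ edge case, and verifying that the escape built into $f_2$ genuinely obstructs $A\leqslant K$, so that the only normal, $f_1$- and $f_2$-invariant ideal of $R$ is zero. Setting up $f_1,f_2$ so that they both define homomorphisms and jointly reduce \emph{both} degrees is what makes the orbit-type $(p,1)$, rather than a single virtual endomorphism, essential.
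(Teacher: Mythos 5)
Your general strategy (produce a $G$-data of orbit-type $(p,1)$ and invoke Proposition~\ref{TA}) matches the paper's, but the specific data you build is not the paper's and, as described, it does not work. The paper takes $H_{1}=G'\langle x,y\rangle$ with transversal $\{e,a,\dots,a^{p-1}\}$ --- powers of the \emph{lamp}, so that $a$ is the element realizing the $p$-cycle and $\sigma=a^{\varphi}$ has order $p$ --- defines $f_{1}$ by extracting the $q(y)(y-1)$-component of an element $s(x,y)$ of the augmentation ideal $\langle x-1,y-1\rangle$, and takes $f_{2}$ to be the automorphism $a\mapsto a$, $x\mapsto y$, $y\mapsto xy$. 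Faithfulness then comes from showing that any $a^{s(x,y)}$ in the core is divisible by $x^{n_{i}}-y^{n_{i-1}}$ for every consecutive pair of Fibonacci numbers, so $s=0$ by the observation preceding the theorem. Your $H_{1}=A\rtimes\langle x^{p},y\rangle$ with transversal $\{e,x,\dots,x^{p-1}\}$ cannot reproduce the stated generators: in your data the $p$-cycle on the first level is induced by $x$, of infinite order, while the lamp $a$ lies in $H_{1}$ and fixes the first level; the stated $\sigma=(e,\dots,e,\sigma)(0\,1\,\dots\,p-1)$ has order $p$, so it must come from a torsion element.

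The serious gap is in faithfulness. Your $f_{1}$ (the $p$-section) only attacks $x$-degrees. For the $y$-direction you lean on $f_{2}$, but $H_{2}=G$ has index $1$, so $f_{2}$ is a globally defined endomorphism and cannot be a ``section in $y$''; and the map you actually write down ($\sigma\mapsto\sigma$, $\alpha\mapsto\alpha\beta$, $\beta\mapsto\alpha$) is, in the dictionary $\sigma=a$, $\beta=x$, $\alpha=y$, an automorphism of $G$ that \emph{fixes} the origin lamp --- it reduces no degree and does not ``feed the lamp into $X$,'' so your description of $f_{2}$ is internally inconsistent. With a $p$-section plus an automorphism, the entire base group $A$ is a nontrivial normal subgroup of $G$ contained in your $H_{1}$ and invariant under both maps, so the $\mathbf{F}$-core is nonzero and the representation is not faithful. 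The escape map that rescues Proposition~\ref{P3.1} (sending $a_{1}^{p(x_{1},\dots,x_{d})}$ to $x_{1}^{p(1,\dots,1)}$) is unavailable here: $a$ has order $p$ and $\mathbb{Z}^{2}$ is torsion-free, so no homomorphism can send the lamp to a nontrivial element of $X$. This torsion obstruction is precisely why the paper abandons the section-plus-rotation scheme of Proposition~\ref{P3.1} and replaces it by the augmentation-ideal $f_{1}$ together with the hyperbolic substitution $(x,y)\mapsto(y,xy)$, whose interplay produces the Fibonacci divisibility that kills the core.
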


\begin{proof}
Let $C_{p} = \langle a \rangle$, $\mathbb{Z}^{2} = \langle x, y \rangle$ and 
$G = C_{p} \wr \mathbb{Z}^{2}$. Define the subgroups $H_{1} = G^{\prime
}\langle x, y \rangle$ and $H_{2} = G$. Note that $[G : H_{1}] = p$.
Elements of  $H_{1}$  have the unique form $a^{s(x,y)}\cdot x^i y^j$ where $s(x,y)$ is an element of the ideal $\mathcal{I}$ of $\mathbb{Z} \langle x, y \rangle$, generated by $x-1$ and $y-1$. 
Also, elements of $\mathcal{I}$ have the unique form 
$$s(x,y)=p(x)(x-1)+q(y)(y-1)+r(x,y)(x-1)(y-1).$$
Define $f_{1} : H_{1} \rightarrow G$  by  $a^{s(x,y)} x^i y^j \mapsto a^{q(y)} y^j.$
Furthermore define $f_{2} : H_{2} \rightarrow G$ by  $a^{r(x,y)}x^i y^j \mapsto a^{r(y,xy)}x^j y^{i+j}.$
It can be checked directly that $f_{1}$ and $f_{2}$ are homomorphisms.

Suppose by contradiction that $K$ is a non-trivial subgroup of  $H_{1} \cap H_{2} = H_{1}$%
, normal in $G$ and $\{f_{1}, f_{2}\}$-invariant. Note that the subgroup $L
= K \cap G^{\prime }$ is trivial if and only if $K$ is trivial. Let $g = a^{s(x, y)}$ be a nontrivial element in $L$. So $a^{s(x, y)f_{1}} = a^{q(y)}$ and successive applications of $f_{1}$ in $a^{q(y)}$ results $q(y) = 0$, thus $x - 1$ divides $s(x, y)$.

Since $a^{s(x,y)f_{2}} = a^{s(y, xy)} \in L$, it follows that $x - 1$ also
divides $s(y, xy)$, this is, $s(y, xy) = (x - 1)t(x, y)$. Then 
\begin{equation*}
\begin{split}
a^{s(x, y)} &= a^{(s(y, xy))f_{2}^{-1}} \\
&= a^{((x-1)t_{1}(x, y))f_{2}^{-1}} \\
&= a^{(x^{-1}y-1)t_{2}(x^{-1}y, x)} \\
&= a^{x^{-1}(y - x)t_{2}(x^{-1}y, x)} \\
&= a^{(x - y)t_{3}(x, y)}
\end{split}
\end{equation*}
and $x - y$ divides $s(x, y)$. Iterating this argument, we find that $x^{n_{i}} - y^{n_{i - 1}}$ divides $s(x, y)$ for all $n_{i}$, where $n_i$ is the Fibonacci sequence defined by $n_{i} = n_{i - 1} + n_{i -2}$ and $n_{0} =
0, n_{1} = 1, n_{2} = 1$, $i \geq 0$. Hence $s(x, y) \in \bigcap_{i =
0}^{\infty} \langle x^{n_{i}} - y^{n_{i - 1}} \rangle = \{0\}$; a contradiction. Therefore $G$ is state-closed of degree $p + 1$ and
orbit-type $(p, 1)$.

Choose the transversals $T_{1}=\{e, a, ..., a^{p-1} \}$ and $T_{2}=\{e\}$ of $H_{1}$  and $H_{2}$, respectively. So we have a
state-closed representation of $G$  generated by the automorphisms
\begin{equation*}
\sigma =(e,e, ..., e, \sigma )(0\,1),\alpha =(\alpha , \alpha
\sigma, ..., \alpha \sigma^{p-1}, \alpha \beta ),\beta =(e, e, ..., e, \alpha ).
\end{equation*}%
Note that $\alpha^{m} \beta^{n} = (\alpha^{m}, (\alpha\sigma)^{m}, ..., (\alpha \sigma^{p-1})^{m}, \alpha^{m + n} \beta^{m})$, for $m, n \in \mathbb{Z}$, hence $\{\alpha, \alpha \beta, \alpha^{2} \beta, \alpha^{3} \beta^{2}, ..., \alpha^{n_{i}} \beta^{n_{i - 1}}, ...\} \subset Q(\alpha)$ and this representation is not finite-state.
\end{proof}

\noindent \textit{\textbf{Question 2}. Is there a faithful finite-state and
state-closed representation for the group }$C_{2}\wr \mathbb{Z}^{2}$\textit{%
\ of degree }$3$\textit{?} \newline

\section{Proof of Theorem E}

\noindent Let $G$ be a state-closed group with respect the data $(\mathbf{m}, \mathbf{H%
}, \mathbf{F})$, where $\mathbf{m} = (m_{1}, ...,
m_{s})$, with $m_{1} \geq 2, ..., m_{s} \geq 2$. 
For each $i=1,...,s$ define $G_{i0}=G$, $G_{ij}= (G_{i (j - 1)})^{f_i^{-1}}$ ($j>0$) and $G_{\omega_i}= \cap_{j\geq 0}G_{ij}.$ With this notation we have:

\begin{thmE}
Let $B$ be a finite abelian group and $G$ be a self-similar group of
orbit-type $(m_{1}, ...,
m_{s})$, with $m_{1} \geq 2, ..., m_{s} \geq 2$. Then the group $$B^{((G_{\omega_1}\setminus G) \times ... \times (G_{\omega_s}\setminus G))}\rtimes G^s$$
is self-similar of orbit-type $(|B|.m_{1} ... m_{s}, \, 1)$.
\end{thmE}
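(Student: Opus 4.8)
The plan is to realize the given semidirect product $E := B^{(P)} \rtimes G^s$, where $P = (G_{\omega_1}\setminus G)\times\cdots\times(G_{\omega_s}\setminus G)$, as a state-closed group via Proposition A. This means I must exhibit an $E$-data of orbit-type $(|B|\cdot m_1\cdots m_s,\,1)$ and verify that the resulting $\mathbf{F}$-core is trivial. The target orbit-type tells me the strategy: one orbit of size $|B|\cdot m_1\cdots m_s$ and one fixed letter. The fixed-point orbit of size $1$ is the familiar device for absorbing the $B^{(P)}$ part (it plays the role the extra coordinate played in Theorem B(1) and in the classical Proposition 6.1 of \cite{BarSid}), while the single large orbit of size $m_1\cdots m_s$ should come from combining the $s$ orbit-actions of $G$ on the $m$-tree into a single diagonal-type action of $G^s$.

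First I would set up the arithmetic of $P$. The action of $G$ on $G_{\omega_i}\setminus G$ by right translation is transitive, and $G_{\omega_i}$ is the stabilizer of a point; since $[G:G_{i1}]=[G:H_i]=m_i$ at the first level, the space $G_{\omega_i}\setminus G$ carries a natural $m_i$-ary tree structure induced by the descending chain $G = G_{i0}\ge G_{i1}\ge\cdots$, exactly as in the transitive self-similar picture attached to the virtual endomorphism $f_i$. Thus $P$, as a $G^s$-set, has a product-tree structure of branching $m_1\cdots m_s$, and $B^{(P)}$ together with $G^s$ acting by translation is precisely the kind of object that Proposition 6.1 handles in the $s=1$ case. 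Second, I would define the subgroup $H \le E$ of index $|B|\cdot m_1\cdots m_s$ playing the role of $H_i$ for the big orbit: it should be the stabilizer in $E$ of a base point of $P$ intersected with the kernel of the projection $B^{(P)}\to B$ at that base point, so that the transversal splits into $m_1\cdots m_s$ translates times $|B|$ copies coming from $B$. Then I would write down two virtual endomorphisms — call them $g_1$ on this $H$ (giving the large orbit) and $g_2 = \mathrm{id}$-type shift on the second orbit of size $1$ — modeled on the recursive formula in Proposition 6.1 and on the product $f = (f_1,\ldots,f_s)$ on $G^s$.

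Third, and this is where the real work lies, I would compute the $\mathbf{F}$-core and show it is trivial. The argument should factor through the structure of $E$: any $\mathbf{F}$-invariant normal subgroup $K$ of $E$ meets $B^{(P)}$ and $G^s$, and I would treat these pieces separately. On the $G^s$ part the invariance under the diagonal endomorphism built from $(f_1,\ldots,f_s)$ should force triviality using that each $f_i$ is core-free on $G$ (this is the orbit-type hypothesis with each $m_i\ge 2$, ensuring the $G_{\omega_i}$ are proper at every level so the recursion genuinely contracts); on the $B^{(P)}$ part, triviality of the core is the same finite-support-plus-contraction argument as in the transitive Proposition 6.1, now carried out over the product index set $P$. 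The main obstacle I anticipate is precisely the bookkeeping that glues these two computations together: I must check that an $\mathbf{F}$-invariant $K$ cannot have a nontrivial ``mixed'' component that is neither purely in $B^{(P)}$ nor purely in $G^s$, and that the translation action of $G^s$ on $P$ combined with the endomorphism genuinely shifts supports off to infinity so that $\bigcap_j K^{f^{-j}}$-type intersections collapse. Once the core is shown trivial, Proposition A delivers the faithful state-closed representation of the claimed degree and orbit-type, and the finite-state remark (when $G$ is) follows as in the transitive case because the defining data involve only finitely many states of $B$ and of $G$.
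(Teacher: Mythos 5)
Your high-level strategy coincides with the paper's: realize $E=B^{(P)}\rtimes G^{s}$ via Proposition A with a two-orbit data, one orbit of size $|B|\,m_{1}\cdots m_{s}$ carried by a subgroup $\mathcal{H}$ and one fixed letter carried by $E$ itself, and then kill the $\mathbf{F}$-core by a support-contraction argument on $B^{(P)}$. However, what you have written is a plan that defers exactly the steps that constitute the proof, and two of the ingredients you do commit to are wrong as stated. First, the index-$|B|\,m_{1}\cdots m_{s}$ subgroup is not ``the stabilizer in $E$ of a base point of $P$'': the stabilizer in $G^{s}$ of $(G_{\omega_1},\dots,G_{\omega_s})$ is $\prod_i G_{\omega_i}$, which in general has infinite index. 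The correct choice is $\bigl\{\phi:\prod_{\bar g\in U}\phi(\bar g)=1\bigr\}\rtimes\prod_i H_i$, with the \emph{first-level} stabilizers $H_i$ on the $G^{s}$-side and the augmentation condition (not vanishing at the base point) on the $B$-side; the augmentation condition is what later forces any nontrivial $\phi$ in the core to have $|\mathrm{Supp}(\phi)|\geq 2$, which is the contradiction that closes the argument once the support has been contracted to the single base point.

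Second, your disposal of the $G^{s}$-part of the core is based on a false premise: you invoke ``each $f_i$ is core-free on $G$,'' but the hypothesis is only that the \emph{joint} $\mathbf{F}$-core is trivial; an individual $f_i$-core may well be nontrivial. Projecting a $\chi_1$-invariant normal $K$ to the $i$-th coordinate of $G^{s}$ yields only an $f_i$-invariant normal subgroup of $H_i$, which you cannot kill. This is precisely why the paper's second virtual endomorphism $\chi_2$, attached to the orbit of size $1$, is not an ``$\mathrm{id}$-type'' map absorbing $B^{(P)}$ (that is its role in Theorem B(1), not here) but the \emph{cyclic coordinate shift} on $G^{s}$ and on $U$: invariance under $\chi_2$ identifies all $s$ coordinate projections of $K$ with a single subgroup contained in $\bigcap_i H_i$, normal in $G$ and invariant under every $f_i$, which triviality of the $\mathbf{F}$-core then destroys. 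Finally, the construction of $\chi_1$ itself is not routine: since each $f_i$ is only partially defined, one needs the coset map $\lambda:(G_{\omega_i}h_i)_i\mapsto(G_{\omega_i}h_i^{f_i})_i$, a proof that it is injective, and its partial inverse $\lambda'$ to define the action of $\chi_1$ on $\phi$; and the contraction step requires the explicit computation $\mathrm{Supp}(\phi_j)=\prod_i G_{\omega_i}\bigl(S^{\pi_i}\cap G_{ij}\bigr)^{f_i}$ together with the minimality of $|S|$ to conclude $S\subseteq\{(G_{\omega_1},\dots,G_{\omega_s})\}$. None of this is carried out in your proposal, so as it stands it is a correct outline of the right strategy with the decisive verifications missing and two of its stated mechanisms needing repair.
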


\begin{proof}
Let $\mathcal{G}$ be the group $B^{((G_{\omega_1}\setminus G) \times ... \times (G_{\omega_s} \setminus G))}\rtimes G^s$ and let $H$ be the subgroup $\prod_{i = 1}^{s} H_{i}$ of $G^s$. Define the set of s-tuples of cosets
$$U=\prod _{i=1}^{s}(G_{\omega_i}\setminus G) $$
and define the group extension
\begin{equation*}
\mathcal{H}=\left\{ \phi : U  \rightarrow B\text{ finitely
supported, }\prod_{\bar{g} \in U}\phi (\bar{g})=1\right\} \rtimes H.
\end{equation*}%

Note that the index $[\mathcal{G}:\mathcal{H}]$ is $|B|\cdot m_1...m_s$. Now consider the map  defined on the s-tuples of cosets.
\begin{eqnarray*}
\lambda&:&  \prod_{i=1}^{s}(G_{\omega_i}\setminus H_{{i}}) \rightarrow \prod_{i=1}^{s}(G_{\omega_i}\setminus G)
\end{eqnarray*}%

\begin{equation*}
\, \, \, \, \, \, \, \, \, \, \, \, \, \, \, \, \, \, \, \, \, \, \, \, (G_{\omega_1}h_1,...,G_{\omega_s}h_s)\mapsto  (G_{\omega_1}h_1^{f_{1}},...,G_{\omega_s}h_s^{f_{s}}).
\end{equation*}%
If $(G_{\omega_i}h_i)^{\lambda}=(G_{\omega_i}h_i')^{\lambda}$, then $(h_i(h_i')^{-1})^{f_{i}}\in  G_{\omega_{i}}$ for each $1\leq i \leq s$; therefore $\lambda$ is injective.

 Define 
\begin{eqnarray*}
\lambda'&:&  \prod_{i=1}^{s}(G_{\omega_i}\setminus G) \rightarrow \prod_{i=1}^{s}(G_{\omega_i}\setminus G)
\end{eqnarray*}%
by

$$ \bar{y}=(G_{\omega_1}y_1,...,G_{\omega_s}y_s)\mapsto  \bar{x}=(G_{\omega_1}x_1,...,G_{\omega_s}x_s),$$ provided $\lambda(\bar{x})= \bar{y};$ otherwise, define it as $\bar{e}=(G_{\omega_1},...,G_{\omega_s}).$

Furthermore, define the map 
 $\chi_{1}: \mathcal{H} \rightarrow \mathcal{G}$
 \begin{equation*}
(\phi ,(h_{i})_{i = 1}^{s})\mapsto  \left(\bar{x} = (G_{\omega_1}x_{1}, ..., G_{\omega_s}x_{s})\mapsto \phi (\lambda'(\bar{x})) ,(h_{i}^{f_{i}})_{i = 1}^{s}\right).
\end{equation*}
Then it is direct to show that $\chi_{1}$ is a homomorphism.

Further, let $\chi_{2}: \mathcal{G} \rightarrow \mathcal{G}$ be
the homomorphism defined by
$$(\phi, (g_{1}, ..., g_{s})) \mapsto ((G_{\omega_i}x_{i})_{i = 1}^{s} \mapsto \phi((G_{\omega_{i}}x_{i+1})_{i = 1}^{s}), (g_{i+1})_{i = 1}^{s})).$$

We claim that the state-closed representation of $\mathcal{G}$ defined by the data $((|B|\prod_{i = 1}^{s} [G : \cap_{i = 1}^{s} H_{i}], 1), \{\mathcal{H}, \mathcal{G}\}, \{\chi_{1}, \chi_{2}\})$ is faithful.

Consider $K \leq \mathcal{H}$ with $K \vartriangleleft \mathcal{G}$ and $K^{\chi_{r}} \leq K$, $r = 1, 2$. Firstly, the assumption that ${\bf F}$ is core-free and the definition of $\chi_{2}$ imply that $K \leq B^{(R \setminus G)^{s}}$. Let $\phi
:U \rightarrow B$ be a non-trivial element in $K$, then $\prod_{\bar{g} \in U}\phi (\bar{g})=1$ and so $|Supp\,(\phi
)| \, \geq 2$. 
Choose $\phi $ with support $S$, of minimal cardinality. Since $%
\phi $ is non-trivial we can assume by $\mathcal{G}$-conjugation that $\phi (G_{\omega_1} , ...,G_{\omega_s})\neq 1$, and so $(G_{\omega_1} , ...,G_{\omega_s}) \in S $ and $S$ contains at least two elements.

Define $\phi_{j} = \phi^{\chi_{1}^{j}}$ for $j \geq 0$; it is given by $\phi_{j}(G_{\omega_1}a_1^{f_1}, ..., G_{\omega_s}a_s^{f_s}) = \phi_{(j - 1)}(G_{\omega_1}a_1, ..., G_{\omega_1}a_s)$ for all $(a_1,...,a_s) \in G_{1j}\times ...\times G_{sj}$, extended by the identity away from $( G_{1j}\times ...\times G_{sj})^{\lambda}$.
So 
$$\phi_1:G_{\omega_1} \setminus H_1^{f_1} \times ... \times G_{\omega_s} \setminus H_s^{f_s} \to B$$
$$\,\,\,\,\,\,\,\,\,\,\,\,\,\,\,\,\,\,\,\,\,\,\,\,\,\,\,\,\,\,\,\,\,\,\,\,\,\,\,\,\,\,\,\,\,\,\,\,\,\,\,\,\,\,\,\,\,\,\,\,\,\,\,\,\,\,\,\,\,\,\,\, \, \, (G_{\omega_1}a_1^{f_1}, ..., G_{\omega_s}a_s^{f_s})\mapsto \phi(G_{\omega_1}a_1, ..., G_{\omega_1}a_s) $$
and $$\,\,\,Supp(\phi_1) \,\,\,= \{(G_{\omega_1}a_1^{f_1}, ..., G_{\omega_s}a_s^{f_s}) \mid (G_{\omega_1}a_1, ..., G_{\omega_s}a_s) \in S  \}$$ $$=G_{\omega_1}(S^{\pi_1}\cap H_1)^{f_1}\times ... \times G_{\omega_s}(S^{\pi_s}\cap H_s)^{f_s},$$
where $\pi_i$, $i=1,...,s$, is the projection on the $i$-th coordinate. 

If 
$(G_{\omega_1}a_1^{f_1}, ..., G_{\omega_s}a_s^{f_s})\in Supp (\phi_1)$ then $(G_{\omega_1}a_1, ..., G_{\omega_s}a_s) \in S$ and so $|Supp (\phi_1)|\leq |S|$. By minimality of the cardinality of $S$ we have that $|Supp (\phi_1)|= |S|$.
Continuing in this manner, the support of $\phi_{j}$ is
$$G_{\omega_1}(S^{\pi_1}\cap G_{1j})^{f_1}\times ... \times G_{\omega_s}(S^{\pi_s}\cap G_{sj})^{f_s},$$
which has the same cardinality as $S$. Therefore, we have $S \subset (G_{1j} , ...,G_{sj})$ for all $j$ and thus $S \subset \{(G_{\omega_1} , ...,G_{\omega_s})\}$; a contradiction.



\end{proof}

\begin{cor}
Let B be a finite abelian group and $G$ be a self-similar
group with orbit-type $(m_{1} > 1, ..., m_{l} > 1, m_{l + 1} = 1, ..., m_{s} = 1)$. Then
there exist proper subgroups $R_{l+1}, ..., R_{s}$ of $G$ such that
$$B^{((G_{\omega_{1}} \setminus G) \times ... \times (G_{\omega_{l}} \setminus G) \times (R_{l + 1} \setminus G) \times ... \times (R_{s} \setminus G))}\rtimes G^{s}$$
is a self-similar group of orbit-type $(|B|.m_{1} ... m_{l}^{s-l+1}, \, 1)$.
\end{cor}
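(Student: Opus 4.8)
The plan is to reduce to Theorem E by modifying the given data so that every orbit becomes nontrivial, while preserving faithfulness. Write the faithful data realizing the self-similar action as $(\mathbf{m},\mathbf{H},\mathbf{F})$ with $\mathbf{m}=(m_1,\ldots,m_l,1,\ldots,1)$, $\mathbf{H}=\{H_1,\ldots,H_l,H_{l+1}=G,\ldots,H_s=G\}$ and $\mathbf{F}=\{f_1,\ldots,f_s\}$, so that $f_i:H_i\to G$, and for $i>l$ the map $f_i:G\to G$ is an endomorphism. Since $G$ is faithfully represented, the $\mathbf{F}$-core of $\mathbf{H}$ is trivial.

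First I would build a new $G$-data $(\mathbf{m}',\mathbf{H}',\mathbf{F}')$ by leaving the first $l$ orbits untouched and replacing each trivial orbit $i>l$ by the pair $(H_l,\,f_i|_{H_l})$; that is, I set $\tilde H_i=H_l$ (of index $m_l\geq 2$) and $\tilde f_i=f_i|_{H_l}:H_l\to G$, which is again a virtual endomorphism. The new orbit-type is then $(m_1,\ldots,m_{l-1},m_l,m_l,\ldots,m_l)$ with $m_l$ occurring $s-l+1$ times, and all of its entries are $\geq 2$.

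The crux is to verify that this replacement preserves faithfulness, i.e. that the $\mathbf{F}'$-core of $\mathbf{H}'$ is still trivial. Here the key point is that $\bigcap_i \tilde H_i=\bigcap_{i=1}^l H_i$ is unchanged (adjoining further copies of $H_l$ changes nothing, since $\bigcap_{i=1}^l H_i\leq H_l$), and that any candidate core $K\vartriangleleft G$ lies inside $\bigcap_{i=1}^l H_i\leq H_l$; for such $K$ one has $K^{\tilde f_i}=K^{f_i|_{H_l}}=K^{f_i}$ for every $i>l$. Hence the invariance conditions defining the $\mathbf{F}'$-core coincide verbatim with those defining the original $\mathbf{F}$-core, so the two cores are equal and therefore trivial; by Proposition A the new data yields a faithful state-closed representation of $G$ of the stated orbit-type. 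I expect this verification to be the main obstacle, precisely because it forces the right replacement: a naive substitution of each trivial orbit by a verbatim copy $(H_l,f_l)$ would discard the constraints coming from $f_{l+1},\ldots,f_s$ and in general destroy faithfulness (already in $C_p\wr\mathbb{Z}^2$, where $l=1$, $s=2$, the fixed-letter endomorphism $f_2$ is indispensable in the Fibonacci argument of Theorem D). Restricting each $f_i$ to $H_l$ rather than discarding it is exactly what keeps the core trivial.

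Finally I would apply Theorem E to the new faithful data, all of whose orbit degrees are $\geq 2$. Writing $R_i$ (for $l<i\leq s$) for the parabolic subgroup $\tilde G_{\omega_i}=\bigcap_{j\geq 0}\tilde G_{ij}$ associated to $(H_l,f_i|_{H_l})$ --- which is proper since $\tilde G_{i1}=H_l\subsetneq G$ --- Theorem E produces a self-similar group $B^{((G_{\omega_1}\setminus G)\times\cdots\times(G_{\omega_l}\setminus G)\times(R_{l+1}\setminus G)\times\cdots\times(R_s\setminus G))}\rtimes G^s$ of orbit-type $(|B|\cdot m_1\cdots m_{l-1}m_l^{\,s-l+1},\,1)=(|B|\,m_1\ldots m_l^{\,s-l+1},\,1)$, where the degree contribution of each new orbit is its index $[G:\tilde H_i]=m_l$. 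The subgroups $R_{l+1},\ldots,R_s$ are then the promised proper subgroups, and the corollary follows.
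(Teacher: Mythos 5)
Your proposal is correct and follows essentially the same route as the paper: the paper likewise replaces each trivial orbit $j>l$ by $\dot H_j=H_l$ with $\dot f_j=f_j|_{H_l}$, observes that the $\mathbf{F}$-core and the $\dot{\mathbf{F}}$-core coincide (hence remain trivial), takes $R_j$ to be the parabolic subgroup of $\dot f_j$, and applies Theorem E. Your explicit justification that the two cores agree (any candidate core already lies in $\bigcap_{i=1}^{l}H_i\leq H_l$, so restricting $f_i$ changes nothing) is a welcome elaboration of a step the paper only asserts.
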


\begin{proof}
For each $l+1 \leq j \leq s$ the restriction $\dot{f}_{j} = f_{j} : \dot{H}_{j} = H_{l} \rightarrow G$ is well-defined. Define the triple $(\dot{\mathbf{m}}, \dot{\mathbf{H
}}, \dot{\mathbf{F}})$ by
$$\dot{\mathbf{m}} = (m_{1}, ..., m_{l}, m_{l}, ..., m_{l}), \dot{\mathbf{H}} = \{H_{1}, ..., H_{l}, \dot{H}_{l + 1} = H_{l}, ..., \dot{H}_{s} = H_{l}\},$$
$$\dot{\mathbf{F}} = \{f_{1}, ..., f_{l}, \dot{f}_{l+1}, ...,  \dot{f}_{s}\}.$$
and $R_{j}$ the parabolic subgroup of $\dot{f}_{j}$ for $j = l+1, ..., s$. Since $G$ has a faithful state-closed representation with respect to the data $(\mathbf{m}, \mathbf{H%
}, \mathbf{F})$, the same holds for the representation with respect to the data $(\dot{\mathbf{m}}, \dot{\mathbf{H
}}, \dot{\mathbf{F}})$; in fact the ${\bf F}$-core of ${\bf H}$ and the $\dot{\bf F}$-core of $\dot{\bf H}$ are the same. Since $m_{i} > 1$ for each $m_{i}$ in $\dot{{\bf m}}$, we apply Theorem E to obtain the result.
\end{proof}

\end{document}